\crefname{theorem}{Theorem}{Theorems}
\crefname{thm}{Theorem}{Theorems}
\crefname{mainthm}{Theorem}{Theorems}
\crefname{lemma}{Lemma}{Lemmas}
\crefname{lem}{Lemma}{Lemmas}
\crefname{remark}{Remark}{Remarks}
\crefname{prop}{Proposition}{Propositions}
\crefname{defn}{Definition}{Definitions}
\crefname{corollary}{Corollary}{Corollaries}
\crefname{section}{Section}{Sections}
\crefname{figure}{Figure}{Figures}
\newtheorem{thm}{Theorem}[section]
\newtheorem{theorem}[thm]{Theorem}
\newtheorem{mainthm}{Theorem}
\newtheorem{lemma}[thm]{Lemma}
\newtheorem{corollary}[thm]{Corollary}
\newtheorem{prop}[thm]{Proposition}
\newtheorem{question}[thm]{Question}
\theoremstyle{definition}
\newtheorem{defn}[thm]{Definition}
\renewcommand{\P}{\mathbb P}
\newcommand{\Z}{\mathbb Z}
\newcommand{\E}{\mathbb E}
\newcommand{\R}{\mathbb R}
\newcommand{\N}{\mathbb N}
\newcommand{\eps}{\varepsilon}
\newcommand{\Var}{\operatorname{Var}}
\newcommand{\LIS}{\operatorname{LIS}}
\newcommand{\iI}{\mathcal{I}}
\DeclareMathOperator{\ord}{ord}
\numberwithin{equation}{section}
\title{Increasing subsequences of random walks}
\author{%
	Omer Angel \thanks{University of British Columbia, Vancouver BC, V6T 1Z2, Canada. E-mail: {\tt
			angel@math.ubc.ca}. Supported in part by NSERC.}
	\and Rich\'ard Balka \thanks{University of British Columbia, and Pacific Institute for the Mathematical Sciences, Vancouver BC, V6T 1Z2, Canada. E-mail: {\tt balka@math.ubc.ca}. 
		Former affiliations: Department of Mathematics, University of Washington,
		and Alfr\'ed R\'enyi Institute of Mathematics, Hungarian Academy of Sciences. Supported by the National Research, Development and Innovation Office-NKFIH, 104178.}
	\and Yuval Peres\thanks{Microsoft Research, 1 Microsoft Way, Redmond, WA 98052, USA. E-mail: {\tt peres@microsoft.com}.}}
\begin{document}
\maketitle

\renewcommand\thefootnote{}
\footnote{{\em 2010 Mathematics Subject Classification}: 60G17, 60G50.}
\footnote{{\em Keywords}: random walk, restriction, monotone, increasing
  subsequence.}

\begin{abstract}
  Given a sequence of $n$ real numbers $\{S_i\}_{i\leq n}$, we consider the
  longest weakly increasing subsequence, namely $i_1<i_2<\dots <i_L$ with
  $S_{i_k} \leq S_{i_{k+1}}$ and $L$ maximal. When the elements $S_i$ are
  i.i.d.\ uniform random variables, Vershik and Kerov, and Logan and Shepp
  proved that $\E L=(2+o(1)) \sqrt{n}$.

We consider the case when $\{S_i\}_{i\leq n}$ is a random walk on $\R$ with increments of mean zero and finite (positive) variance.
In this case, it is well known (e.g., using record times) that the length of the longest increasing subsequence satisfies $\E L\geq c\sqrt{n}$.
Our main result is an upper bound $\E L\leq n^{1/2 + o(1)}$, establishing the leading asymptotic behavior.
If $\{S_i\}_{i\leq n}$ is a simple random walk on $\Z$, we improve the lower bound by showing that $\E L \geq c\sqrt{n} \log{n}$.

We also show that if $\{\mathbf{S}_i\}$ is a simple random walk in $\Z^2$, then there is a subsequence of
$\{\mathbf{S}_i\}_{i\leq n}$ of expected length at
least $cn^{1/3}$ that is increasing in each coordinate. The above one-dimensional result yields an
upper bound of $n^{1/2 + o(1)}$. The problem of determining the correct exponent remains open.
\end{abstract}

\pagebreak

\section{Introduction}

For a function $S\colon \N\to \R$, its restriction to a subset $A$ of its
domain is denoted $S|_A$.  We say that $S|_A$ is {\bf increasing} if
$S(a)\leq S(b)$ for all $a,b\in A$ with $a\leq b$. Define
\[
\LIS(S|_{[0,n)})=\max\{|A|: A\subset [0,n), S|_A \text{ is increasing}\}.
\]
The main goal of this paper is to investigate $\LIS(S|_{[0,n)})$ when
$S\colon \N\to\Z$ is a random walk.  The simple random walk is the most natural
case, but our results apply to walks with steps of mean zero and finite
(positive) variance, that is, $S(n)=\sum_{i=1}^{n} X_i$ such that $X_i$ is an
i.i.d.\ sequence with $\E X_1=0$ and $0<\Var(X_1)<\infty$.  By normalising
$X_i$ we may clearly assume that $\Var(X_1)=1$.  We say that $S$ is the
simple random walk if $\P(X_1=1) = \P(X_1=-1) =1/2$.

The famous Erd\H{o}s-Szekeres Theorem \cite{ES} implies that $S|_{[0,n)}$
must contain either an increasing or a decreasing subsequence of length at
least $\sqrt{n}$.  This is sharp for general sequences, and it is easy to
see that there are even $n$-step simple walks on $\Z$ for which the longest increasing subsequence has length of order $\sqrt{n}$. By symmetry, increasing and decreasing subsequences have the same length distribution, but this does not immediately imply that a similar bound holds in high probability.

In random settings, there have been extensive studies of the longest
increasing subsequence in a uniformly random permutation $\sigma_n\in S_n$ initiated by Ulam~\cite{U}.
This is easily equivalent also to the case of a sequence $\{S(i)\}_{1\leq i \leq n}$ of i.i.d.\
(non-atomic) random variables.  A rich theory rose from the study of this
question, which is closely related to last passage percolation and other
models.  It was proved by Vershik and Kerov \cite{VK} and by Logan and
Shepp \cite{LS} that $\E \LIS(\sigma_n)=(2+o(1)) \sqrt{n}$ and $\LIS(\sigma_n)/\sqrt{n}\to 2$ in probability as $n\to \infty$. In this
case, much more is known. Baik, Deift and Johansson \cite{BDJ} proved that
the fluctuations of $\LIS(\sigma_n)$ scaled by $n^{1/6}$ converge to the
Tracy-Widom $F_2$ distribution, first arising in the study of the Gaussian
Unitary Ensemble. We refer the reader to Romik's book \cite{romik_LIS} for
an excellent survey of this problem.

On the other hand, it appears that this problem has not been studied so far
even for a simple random walk $S$. The expected length of the longest strictly
increasing subsequence of $S|_{[0,n)}$ is at most the expected size of the
range $S([0,n))$, hence is $O(\sqrt{n})$. Thus we consider (weakly) increasing subsequences.
Taking the set of record times, or alternatively the zero set of $S$ both
yield increasing subsequences of expected length $\Theta(\sqrt{n})$.  It is not
immediate how to do any better. The largest level set of $S$ still has size $\Theta(\sqrt{n})$.  See \cref{F:LIS}
for the longest increasing subsequence in one random walk instance.  Note
that the set of record times yields a similar lower bound for a general
random walk with mean zero and finite variance.

\begin{figure}
  \centering
  \includegraphics[width=0.6\textwidth]{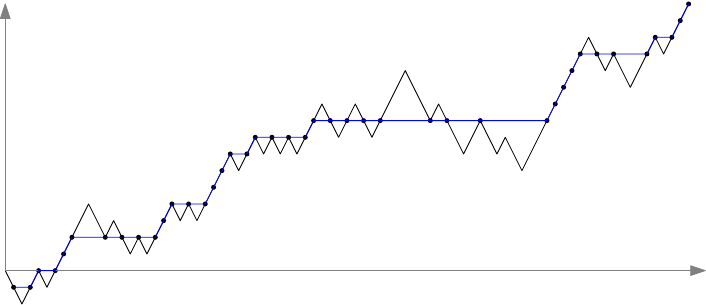}
  \caption{One increasing subsequence of maximal length in a simple random
    walk.}
  \label{F:LIS}
\end{figure}

On some reflection, one finds a number of arguments that yield the weaker
bound $\LIS(S|_{[0,n)}) \leq n^{3/4+\eps}$ for a simple random walk $S$.
For example, first one can show that with high probability, in every interval
$I\subset [0,n)$, each value $v$ is visited at most $C \sqrt{|I|} \log n$
times.  Assume $A\subset [0,n)$ is such that $S|_{A}$ is increasing. For
each $v\in S(A)$ define the interval $I_v=[a_v,b_v]$, where $a_v\in A$ is
the first (and $b_v\in A$ is the last) time $t\in A$ such that $S(t)=v$.
By monotonicity the intervals $I_v$ are disjoint. The length of the
subsequence is then bounded by $\sum_v C\sqrt{|I_v|} \log n$, where the
number of intervals is at most $R=|S([0,n))|$. As $R\leq n^{1/2+\eps}$ with
high probability, the Cauchy-Schwarz inequality gives the upper bound $C
\sqrt{n R} \log n\leq n^{3/4+\eps}$.  However, going beyond the exponent
$3/4$ requires more delicate arguments even in the case of a simple
random walk.

Although Ulam's problem and the problem of this paper are superficially
similar, it turns out that their structure is different, and finding
monotone subsequences in random walks is more closely related in nature to
restriction theorems for continuous functions.  In the continuous setting
the upper Minkowski dimension plays the role of counting.  Balka and Peres
\cite{BP} showed that a Brownian motion is not monotone on any set of
Hausdorff dimension greater than $1/2$.  However, working with the
Minkowski dimension requires understanding the structure of a set at a
specific scale, which is not needed for the Hausdorff dimension.  The proof
of \cite{BP} is based on Kaufman's uniform dimension doubling theorem for
two-dimensional Brownian motion. A key fact used there is that a closed set in $[0,1]$ of Hausdorff dimension strictly greater than 1/2 intersects the zero set of a Brownian motion with positive probability. Hausdorff dimension cannot be replaced here by upper Minkowski dimension.
Therefore, the methods developed in \cite{BP} are not powerful enough to prove the leading term of the upper bound even in the case of a simple
random walk.  However, Balka, M\'ath\'e and Peres analyzed the behaviour of
some self-affine functions motivated by \cite{BP} and questions of Kahane
and Katznelson \cite{KK}, which had a large impact on this paper. Later on,
M\'ath\'e and the authors of this paper \cite{ABMP} proved restriction
theorems for fractional Brownian motion using similar methods to this
paper's ones, and the case of self-affine functions was also handled in
\cite{ABMP}. In particular, they proved that a Brownian motion is not
monotone on any set of upper Minkowski dimension greater than $1/2$. Hence
the interaction between this paper and \cite{ABMP} played an important role
in both directions. For other restriction theorems in the continuous,
deterministic case, and for generic functions in the sense of Baire
category see Elekes \cite{E}, Kahane and Katznelson \cite{KK}, and
M\'ath\'e \cite{M}.

\bigskip

The main goal of this paper is to prove the following theorem. 


\begin{mainthm}\label{T:UB}
  Let $S(n)=\sum_{i=1}^{n} X_i$ be a random walk with i.i.d.\ steps
  satisfying  $\E(X_i)=0$ and
 $\Var(X_i) < \infty$.  For all $n$ large
  enough, for all $\ell \in \N^+$ we
 have
  \begin{equation*}
  \P\left(\LIS(S|_{[0,n)})\geq \ell \sqrt{n} 2^{5\sqrt{\smash[b]{\log_4 n (\log_4 \log_4 n)}}} \right)\leq
  \left(\frac{2}{\log_4 n}\right)^{2\ell}.
  \end{equation*}
  Moreover, if $E e^{t|X_1|}<\infty$ for some $t>0$ then we have
  \[
  \P\left(\LIS(S|_{[0,n)})\geq \ell \sqrt{n} 2^{5\sqrt{\smash[b]{\log_4 n (\log_4 \log_4 n)}}} \right) \leq
  n^{-\ell/2}.
  \]
\end{mainthm}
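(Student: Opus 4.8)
The plan is to prove the bound by a multiscale recursion comparing scale $n$ (time length $4^n$) to the four length-$4^{n-1}$ sub-blocks, organised so that the threshold $2^{n+4\sqrt{\smash[b]{n\log_2 n}}}$ emerges as the accumulation, over the $n$ scales, of a factor that is only just larger than $2$ at each step. Writing $a_n=2^{n+4\sqrt{\smash[b]{n\log_2 n}}}$, the point is that $a_n/a_{n-1}=2\cdot 2^{4(\sqrt{n\log_2 n}-\sqrt{(n-1)\log_2(n-1)})}=2(1+o(1))$, and the sum $\sum_{m\le n}4(\sqrt{m\log_2 m}-\sqrt{(m-1)\log_2(m-1)})$ telescopes to $4\sqrt{n\log_2 n}$; so the entire subpolynomial factor is exactly the sum of the per-scale slacks. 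I would therefore aim for a recursive inequality of the form ``tail at scale $n$ $\le$ combination of scale-$(n-1)$ tails for four independent sub-walks, plus a range-deviation term,'' and solve it.

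First I would record two deterministic facts about an optimal increasing subsequence $A\subset[0,4^n)$, split by time as $A=A_1\sqcup A_2\sqcup A_3\sqcup A_4$ over the blocks $I_1,\dots,I_4$ of length $4^{n-1}$. (i) Each $A_j$ is increasing for $S|_{I_j}$, and the four blocks are independent walks by independence of increments (since $\LIS$ is invariant under adding a constant to all values). (ii) The stacking property: if $a\in A_i$, $b\in A_j$ with $i<j$ then $a<b$, hence $S(a)\le S(b)$, so the value-ranges of $A_1,\dots,A_4$ are ordered and their heights $h_j:=\max_{A_j}S-\min_{A_j}S$ satisfy $\sum_j h_j\le \max_{[0,4^n)}S-\min_{[0,4^n)}S=R_n$, the range. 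I would then control the range: for general increments a maximal (Kolmogorov-type) inequality gives $\P(R_n\ge \lambda 2^n)\lesssim \lambda^{-2}$, refined to the required $1/n^2$-type bound, while a finite exponential moment yields $\P(R_n\ge \lambda 2^n)\le e^{-c\lambda}$, which is what upgrades the conclusion to $2^{-\ell n}$.

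The heart of the argument is to convert (i)--(ii) into a recursion with constant close to $2$ rather than $4$. The naive estimate $|A_j|\le \LIS(S|_{I_j})$ loses a factor $4$; it is the stacking budget $\sum_j h_j\le R_n$ together with $R_n=(2+o(1))2^{n-1}$ that must recover the factor $2$. Concretely, I would bound $|A_j|$ by the longest increasing subsequence of the $j$-th sub-walk confined to some value-band of height $h_j$ (a function of block $I_j$ alone), union-bound over a controlled, at most polynomial, collection of admissible allocations $(h_1,\dots,h_4)$ with $\sum_j h_j\le R_n$, and apply the scale-$(n-1)$ tail to each factor using independence of the blocks. Solving the recursion, the four-fold branching, the allocation count, and the per-scale range-deviation probability ($\sim 1/n^2$, resp.\ $\sim 2^{-n}$) combine into the geometric-in-$\ell$ tails $(2/n^2)^\ell$ and $2^{-\ell n}$, the base absorbing the combinatorial branching.

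The main obstacle is exactly this confinement step: one must show that restricting a block's increasing subsequence to a band of height $h_j$ reduces its typical length enough that the four contributions sum to only $(2+o(1))a_{n-1}$ rather than $4a_{n-1}$. The difficulty is that confinement never pushes the length below the local-time floor $\sim 2^{n-1}$ — sitting at a single value already yields an increasing subsequence of that length — so the gain cannot come from small bands individually but only from the global budget $\sum_j h_j\le R_n$ and from the fact that the range doubles, not quadruples, per scale. Managing this floor — equivalently, verifying the near-self-consistency of the scaling relation $2\psi(x/2)=\max_{\sum x_j\le x}\sum_{j=1}^4\psi(x_j)$, whose defect at small argument is precisely what produces the $\sqrt{n\log_2 n}$ correction — while simultaneously keeping the failure probabilities summable over the $n$ scales and multiplicative over the $\ell$ levels of excess, is where essentially all of the work lies.
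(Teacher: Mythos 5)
Your high-level intuition is right in two respects: the quantity that limits the length of an increasing set is the value-range budget (your stacking inequality $\sum_j h_j\le R_n$ is exactly the paper's variation bound \eqref{eq:var}--\eqref{eq:Ii}), and the subpolynomial factor $2^{4\sqrt{n\log_2 n}}$ is the accumulation of per-scale losses. But the proposal has a genuine gap at the step you yourself flag as ``where essentially all of the work lies'': the confinement claim that $\max_{\sum_j h_j\le R_n}\sum_{j=1}^4 L_{n-1}(h_j)\le 2(1+o(1))\,L_{n-1}$, where $L_{n-1}(h)$ is the LIS of a block confined to a band of height $h$. No mechanism is given for proving this, and it is not routine: it requires an induction hypothesis on the entire two-parameter family $L_m(h)$ rather than on $L_m$ alone, and the local-time floor $L_{n-1}(h)\gtrsim 2^{n-1}$ for \emph{every} $h\ge 0$ means that already the allocation $h_j\equiv R_n/4$ forces $\sum_j L_{n-1}(h_j)\gtrsim 4\cdot 2^{n-1}$, so the inequality you need is consistent only because $L_{n-1}$ exceeds the floor by precisely the subpolynomial factor you are trying to control. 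Worse, your own telescoping computation shows the admissible per-scale loss beyond the factor $2$ is $1+O(\sqrt{\log n/n})\to 1$; any argument that loses a fixed constant $c>1$ per scale (as a union bound over allocations, or any appeal to local-time fluctuations, inevitably will) yields $c^{\,n}=T^{\log_4 c}$ over the $n$ scales with $T=4^n$, recovering only the weaker $T^{1/2+\eps}$ bound and not the theorem. This is exactly why the paper does not recurse one scale at a time: it jumps $k\approx\sqrt{n\log_2 n}$ orders per level, using only $m\approx\sqrt{n/\log_2 n}$ levels, so that a per-level multiplicative loss as large as $O(n)$ (from the uniform local-time bound, \cref{P:local_time_bound}) is affordable, giving $(2\gamma n)^{m+1}=2^{O(\sqrt{n\log_2 n})}$. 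Moreover, the paper's recursion (\cref{P:n_product}) is applied to the deterministic counts $|D_\ell|$ of intervals meeting a fixed increasing set $A$, conditionally on a single high-probability event, rather than to a random ``confined LIS'' whose full distribution would itself need to be understood.

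The probabilistic bookkeeping is also not in place. The event $\{\sum_{j}L_{n-1}(h_j)\ge a_n\}$ is not a product of four independent tail events (one large summand suffices), so ``apply the scale-$(n-1)$ tail to each factor using independence'' does not bound it; a union bound over which block is responsible multiplies the failure probability by $4$ per scale, which over $n$ scales destroys the target $2/n^2$ (in the paper that probability comes almost entirely from one application of \cref{t:lambda1} at the top scale). Finally, the geometric-in-$\ell$ tail does not emerge from the branching structure: the paper obtains \eqref{eq:PS} by a separate renewal argument --- the successive stopping times at which the LIS accrues another $N$ indices have i.i.d.\ increments by the strong Markov property, so reaching $\ell N$ requires $\ell$ independent successes. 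Some such argument is needed; ``the base absorbing the combinatorial branching'' is not a substitute.
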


In the following corollaries let $S$ be a random walk as in Theorem~\ref{T:UB}.

\begin{corollary} \label{c:1}
  For all $\eps>0$ with probability $1-o(1)$ we have
  \[
  \LIS(S|_{[0,n)})\leq n^{1/2+\eps}.
  \]
\end{corollary}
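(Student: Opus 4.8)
The plan is to deduce the corollary directly from Theorem~\ref{T:UB} by translating the exponential scale $4^n$ back to the standard scale. To avoid a clash with the variable $n$ appearing in the theorem, write $N$ for the length of the walk in the corollary; I want to show $\LIS(S|_{[0,N)}) \le N^{\frac12+\eps}$ with probability $1-o(1)$ as $N\to\infty$. First I would observe that $\LIS(S|_{[0,\cdot)})$ is nondecreasing in its domain: any increasing subsequence inside $[0,N)$ is also one inside $[0,M)$ whenever $N\le M$. Choosing $n=\lceil \log_4 N\rceil$, so that $4^{n-1}<N\le 4^n$ and $n\to\infty$ with $N$, monotonicity gives $\LIS(S|_{[0,N)})\le \LIS(S|_{[0,4^n)})$.

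Now apply Theorem~\ref{T:UB} with $\ell=1$: for $N$ (hence $n$) large enough,
\[
\P\left(\LIS(S|_{[0,4^n)})\ge 2^{n+4\sqrt{\smash[b]{n\log_2 n}}}\right)\le \frac{2}{n^2}.
\]
The right-hand side tends to $0$ since $n\ge \log_4 N -1\to\infty$, so the complementary event has probability $1-o(1)$. It therefore suffices to check that the threshold $2^{n+4\sqrt{n\log_2 n}}$ is at most $N^{\frac12+\eps}$ for all large $N$.

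For the threshold, I would split off the main power of two from the correction: $2^{n+4\sqrt{n\log_2 n}} = 2^n\cdot 2^{4\sqrt{n\log_2 n}}$. Since $4^{n-1}<N$ we have $2^n=\sqrt{4^n}<2\sqrt{N}$, which is $N^{\frac12+o(1)}$. For the correction factor I would verify that its exponent is subpolynomial in $\log N$: as $n=\Theta(\log N)$,
\[
4\sqrt{n\log_2 n}=\Theta\!\left(\sqrt{\log N\,\log\log N}\right)=o(\log_2 N),
\]
so that $2^{4\sqrt{n\log_2 n}}=N^{o(1)}\le N^{\eps/2}$ once $N$ is large. Combining, $2^{n+4\sqrt{n\log_2 n}}\le N^{\frac12+\eps}$ for all large $N$, which completes the deduction.

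The argument is routine given Theorem~\ref{T:UB}; the only point needing care is the last step, namely confirming that $2^{4\sqrt{n\log_2 n}}$ is genuinely of the form $N^{o(1)}$ and hence absorbed by an arbitrarily small polynomial factor $N^{\eps}$. This is where one uses that $\sqrt{\log N\,\log\log N}\,/\log N\to 0$, i.e.\ that the $\sqrt{n\log_2 n}$ correction in the exponent grows slower than any fixed multiple of $n=\Theta(\log N)$.
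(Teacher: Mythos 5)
Your deduction is correct and is exactly the ``switch back to the standard scale'' that the paper leaves implicit: take $n=\lceil\log_4 N\rceil$, use monotonicity of $\LIS$ in the domain, apply Theorem~\ref{T:UB} with $\ell=1$, and absorb $2^{4\sqrt{n\log_2 n}}=N^{o(1)}$ into $N^{\eps}$. All the estimates check out, so nothing further is needed.
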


\begin{corollary} \label{c:E}
For all $\varepsilon>0$ and $n$ large enough
$$\E \LIS(S|_{[0,n)}) \leq n^{1/2+\eps}.$$
\end{corollary}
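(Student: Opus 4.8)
The plan is to deduce the expectation bound directly from the tail estimate in \cref{T:UB}, rather than from the high-probability statement of \cref{c:1}. The latter controls only a typical value of $\LIS$ and says nothing about the contribution of rare large deviations, whereas the polynomial (indeed exponential) decay of the tail in \cref{T:UB} is exactly what makes $\LIS$ integrable with the right mean. I would first work on the exponential scale $N=4^n$ and abbreviate $M=2^{n+4\sqrt{\smash[b]{n\log_2 n}}}$, so that \cref{T:UB} reads $\P(\LIS(S|_{[0,4^n)})\geq \ell M)\leq (2/n^2)^\ell$ for every $\ell\in\N^+$.

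Since $\LIS$ is a nonnegative integer-valued random variable, I would use the layer-cake identity $\E\LIS=\sum_{k\geq 1}\P(\LIS\geq k)$ and group the sum into consecutive blocks of length $M$. On the $\ell$-th block one has $\P(\LIS\geq k)\leq \P(\LIS\geq \ell M)$ by monotonicity of the tail, and each block contributes at most $M$ terms, giving
\[
\E\LIS(S|_{[0,4^n)})\leq M\sum_{\ell\geq 0}\P\bigl(\LIS(S|_{[0,4^n)})\geq \ell M\bigr)\leq M\Bigl(1+\sum_{\ell\geq 1}(2/n^2)^\ell\Bigr).
\]
For $n$ large the geometric series is at most $1$, so $\E\LIS(S|_{[0,4^n)})\leq 2M=2\cdot 2^{n+4\sqrt{\smash[b]{n\log_2 n}}}$.

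It then remains to convert back to the standard scale. Given an arbitrary length $m$, I would choose $n=\lceil \log_4 m\rceil$, so that $m\leq 4^n<4m$. By monotonicity of $\LIS$ under extension of the interval, $\E\LIS(S|_{[0,m)})\leq \E\LIS(S|_{[0,4^n)})\leq 2\cdot 2^{n+4\sqrt{\smash[b]{n\log_2 n}}}$. Since $4^n<4m$ we have $2^n<2\sqrt{m}$, and since $n<\tfrac12\log_2 m+1$ the remaining factor satisfies $2^{4\sqrt{\smash[b]{n\log_2 n}}}=2^{O(\sqrt{\log m\,\log\log m})}=m^{o(1)}$.

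The only point requiring care is this last subpolynomial estimate: because $\sqrt{\log m\,\log\log m}=o(\log m)$, the correction factor grows more slowly than any positive power of $m$, so for fixed $\eps>0$ one has $4\cdot m^{o(1)}\leq m^{\eps}$ once $m$ is large enough. Combining the pieces yields $\E\LIS(S|_{[0,m)})\leq m^{1/2+\eps}$ for all sufficiently large $m$, as claimed. No serious obstacle arises; the argument is a routine passage from tails to moments, and the main thing to track faithfully is the $m^{o(1)}$ error through the change of scale.
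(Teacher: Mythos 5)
Your proof is correct and follows the route the paper intends: \cref{c:E} is stated as a direct consequence of \cref{T:UB}, and summing the geometric tail $(2/n^2)^\ell$ via the layer-cake identity, then converting from the $4^n$ scale back to $n$ while absorbing the $2^{4\sqrt{\smash[b]{n\log_2 n}}}=m^{o(1)}$ factor into $m^{\eps}$, is exactly the standard derivation. The only (harmless) nitpick is that each block of integers of length $M$ contains at most $\lfloor M\rfloor+1$ terms rather than $M$, which changes nothing in the final bound.
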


In the other direction, we show that in the case of a simple random walk,
with high probability there are increasing subsequences somewhat longer
than the trivially found ones.

\begin{mainthm}\label{T:LB}
  Let $S\colon \N\to \Z$ be a simple random walk. For every $\eps>0$ for all $n$ large enough
  \[
  \P\left(\LIS(S|_{[0,n)}) < \eps \sqrt{n} \log_2 n\right) \leq 250\eps.
  \]
  Consequently, for all large enough $n$ we have
  \[
  \E \left(\LIS(S|_{[0,n)})\right) \geq (1/1000) \sqrt{n} \log_2 n.
  \]
\end{mainthm}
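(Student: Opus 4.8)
The expectation bound is an immediate consequence of the probability bound, so I would concentrate on the latter. Taking $\eps=1/500$ in the first inequality gives $\P\left(\LIS(S|_{[0,n)})<\frac{1}{500}\sqrt n\,\log_2 n\right)\le 250/500=1/2$, hence $\LIS(S|_{[0,n)})\ge\frac{1}{500}\sqrt n\,\log_2 n$ with probability at least $1/2$; multiplying this value by $1/2$ yields $\E\LIS(S|_{[0,n)})\ge\frac{1}{1000}\sqrt n\,\log_2 n$. Thus the entire problem reduces to exhibiting, with probability at least $1-250\eps$, an increasing subsequence of length at least $\eps\sqrt n\,\log_2 n$.

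To build such a subsequence I would use a recursive, multi-scale construction organized by the dyadic levels of the walk's \emph{values} (value rather than time, since monotonicity of the subsequence is a constraint on values). Writing $\sqrt n\approx 2^m$, there are about $\log_2 n=2m$ dyadic value-scales available. The guiding recursion has the form $L(2M)\ge 2L(M)+cM$, where $L(M)$ is a high-probability lower bound for the longest increasing subsequence of a walk whose values sweep a band of height $2M$. The factor $2$ comes from recursing separately on the lower band $[0,M]$ and the upper band $[M,2M]$: for the lower band one excises the times at which the walk lies above the pivot level $M$ and reindexes, obtaining a walk reflected into the band, and symmetrically for the upper band. The additive term $cM$ is the new gain at this scale, namely the local time (number of visits) of the walk at the pivot level $M$, which for a simple random walk sweeping a band of height $2M$ is of order $M$ and concentrated. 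Solving $L(2M)\ge 2L(M)+cM$ gives $L(M)=\Theta(M\log M)$, i.e.\ $\Theta(\sqrt n\,\log_2 n)$ at the top scale, which is exactly the target.

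The main obstacle is the \emph{monotone concatenation}: the points harvested from the lower band, from the pivot, and from the upper band must together form a single increasing subsequence, i.e.\ be correctly ordered in both time and value. Value-ordering is automatic from the band decomposition (lower band $\le$ pivot $\le$ upper band), but time-ordering is not, since visits to the three value-ranges are interleaved along the walk. I would resolve this using a first-passage/excursion decomposition: cut the walk at the first time $\sigma$ it reaches the pivot, treat the initial segment $[0,\sigma)$ (values below the pivot) with the lower-band recursion and the remaining segment with the upper-band recursion, and insert the pivot-level visits at the seam, relying on the strong Markov property and the exact self-similarity of first-passage walks to make the recursion close and the pieces independent. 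Ensuring that the additive $cM$ term survives the ordering constraint, rather than being wasted together with the excised excursions, is the delicate point and will require a careful choice of which excursions contribute which sub-band.

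Finally, for the probability (lower-tail) bound I would upgrade the deterministic recursion to a probabilistic one. The total length is essentially a sum, over the $\approx\log_2 n$ scales and over the excursions within each scale, of contributions that are roughly independent (by the Markov/excursion structure) and whose expectations add up to $\Theta(\sqrt n\,\log_2 n)$. Concentration of the pivot-level local times (each a sum of order $M$ independent geometric return counts), together with independence across scales, then controls the lower tail; a crude second-moment (Paley--Zygmund) estimate, or a union bound over the per-scale failure events, should suffice to reach the stated form $\P\left(\LIS(S|_{[0,n)})<\eps\sqrt n\,\log_2 n\right)\le 250\eps$, the generous constant $250$ reflecting the lossiness of these estimates.
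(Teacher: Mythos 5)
Your construction is, in essence, the paper's: the dyadic-in-value pivot scheme, the geometric visit counts (mean $2^k$ for a pivot of 2-order $k$) obtained from an excursion/first-passage decomposition, the recursion summing $2^{n-k-1}\cdot 2^k$ over scales to reach $\Theta(M\log M)$, and a second-moment lower-tail bound all appear in the paper in exactly these roles. Your identification of the monotone-concatenation problem, and its resolution via the strong Markov property at the seams so that the full geometric count at each pivot survives the time-ordering constraint, is precisely what the paper's excursion lemma and the definition of the windows $[b_{\underline{x}},a_{\overline{x}}]$ accomplish.

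There is, however, one concrete calibration gap, and it is where the $\eps$ in the conclusion actually comes from. You set the band height at $2^m\approx\sqrt n$. With that choice, the event that the walk sweeps the whole band within $n$ steps --- i.e.\ $\tau_{2^m}\leq n$ --- has probability bounded away from $1$ by an absolute constant, so the argument as written yields only $\P\left(\LIS(S|_{[0,n)})<c\sqrt n\log_2 n\right)\leq C$ for some fixed $C<1$, not a bound that tends to $0$ with $\eps$; no amount of per-scale concentration or union bounding repairs this, since the dominant failure mode is that the walk simply never climbs that high. The fix (which the paper implements) is to make the band height proportional to $\eps$: choose $m$ with $m2^m\asymp\eps\sqrt n\log_2 n$, hence $2^m\asymp\eps\sqrt n$, so that $\P(\tau_{2^m}>n)=O(2^m/\sqrt n)=O(\eps)$ while the construction on $[0,\tau_{2^m})$ still produces $\asymp m2^{m-1}\asymp\eps\sqrt n\log_2 n$ points, the conditional shortfall probability being $O(1/m^2)=o(1)$ by Chebyshev. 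A secondary imprecision: a single pivot's visit count is a geometric variable with mean $M$ and is not concentrated; only the aggregate over the $2^{n-k-1}$ independent pivots at each scale concentrates, which is why the global second moment, rather than per-scale concentration, is the right tool.
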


In \cref{s:high} we consider higher dimensional random walks.
Let $d\geq 2$ and let $S\colon \N \to \R^d$. We say that $S$ is \emph{increasing}
on a set $A\subset \N$ if all the coordinate functions of $S|_{A}$ are
non-decreasing, i.e.\ $S$ is increasing with respect to the coordinate-wise
partial order on $\R^d$.  Generalizing $\LIS$, we define
\[
\LIS(S|_{[0,n)})=\max\{|A|: A\subset [0,n), S|_A \text{ is increasing}\}.
\]
Since the restriction of a random walk to a single coordinate is again a
random walk, if $S$ is a $d$-dimensional random walk with mean $0$ and
bounded second moment then \cref{c:1} implies that $\LIS(S|_{[0,n)}) \leq
n^{1/2+o(1)}$ with probability $1-o(1)$. For a large class of two-dimensional
random walks we are able to prove a lower bound as well. However,
the problem of determining the correct exponent remains open.

\begin{mainthm}\label{T:2d_LB}
Let $S\colon \N\to \Z^2$ be a two-dimensional random walk with steps $X\in\R^2$ for which
\begin{itemize} \itemsep0pt
  \item the mean $\E X=\mathbf{0}$ is the zero vector,
  \item the covariance matrix $\mathrm{Cov}(X)=I_2$ is the identity matrix,
  \item the coordinates of $X$ have finite $2+\eta$ moments for some $\eta>0$.
\end{itemize}
Then there is a constant $c\in \R^+$ such that for eevery $\varepsilon>0$
and $n>0$
\[
  \P\left(\LIS(S|_{[0,n)}) < \eps n^{1/3}\right) \leq c\eps.
\]
Consequently, for all $n>0$ we have
$$\E \LIS(S|_{[0,n)}) \geq {\textstyle \frac{1}{4c}} n^{1/3}.$$
\end{mainthm}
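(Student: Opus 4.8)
The plan is to produce an explicit increasing subsequence built from \emph{joint ascending ladder times} and to control its length through a first-passage (persistence) estimate for the walk confined to a wedge; the exponent $\tfrac13$ will emerge as the survival exponent of the wedge of opening angle $\tfrac{3\pi}{2}$. First, the expectation bound follows from the tail bound in the usual way: taking $\eps=\tfrac{1}{2c}$ gives $\P(\LIS(S|_{[0,n)})<\tfrac{1}{2c}n^{1/3})\le\tfrac12$, so $\E\LIS(S|_{[0,n)})\ge\tfrac12\cdot\tfrac{1}{2c}n^{1/3}$. Hence it suffices to prove the probability estimate, and after enlarging $c$ we may assume $\eps$ is small and $n$ is large, the remaining cases being trivial.

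Write $S=(S^{(1)},S^{(2)})$. Define $\sigma_0=0$ and, recursively, $\sigma_{i+1}$ to be the first time $t>\sigma_i$ with $S^{(1)}(t)>S^{(1)}(\sigma_i)$ and $S^{(2)}(t)>S^{(2)}(\sigma_i)$. By construction the points $S(\sigma_0),S(\sigma_1),\dots$ are strictly increasing in both coordinates, hence form an increasing subsequence, so
\[
\LIS(S|_{[0,n)})\ \ge\ N(n):=\#\{i\ge 1:\sigma_i\le n\}.
\]
By the strong Markov property the increments $W_i=\sigma_{i+1}-\sigma_i$ are i.i.d., each distributed as $W$, the first time a walk started at the origin has both coordinates positive. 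Since the $\sigma_i$ are increasing, $\{N(n)<\ell\}=\{\sigma_\ell=\sum_{i=1}^\ell W_i>n\}$, and because $\LIS\ge N(n)$ we get $\{\LIS<\eps n^{1/3}\}\subseteq\{\sigma_\ell>n\}$ for $\ell=\lceil\eps n^{1/3}\rceil$. Everything thus reduces to the upper tail of a sum of i.i.d.\ copies of $W$.

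The heart of the matter is the tail of $W$. The event $\{W>t\}$ is exactly that the walk avoids the open positive quadrant up to time $t$, i.e.\ stays in the closed wedge $\{x\le0\}\cup\{y\le0\}$ of opening angle $\tfrac{3\pi}{2}$. Because the covariance is the identity, the Brownian scaling limit is isotropic, and the probability that planar Brownian motion remains in a wedge of angle $\beta$ up to time $t$ decays like $t^{-\pi/(2\beta)}$; with $\beta=\tfrac{3\pi}{2}$ this is $t^{-1/3}$. The key lemma I would prove is the matching \emph{upper} bound for the walk,
\[
\P(W>t)\le Ct^{-1/3},
\]
under the stated hypotheses. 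I expect this persistence estimate to be the main obstacle. The natural tool is the theory of random walks confined to cones (in the spirit of Denisov and Wachtel), whose exit exponent from a two-dimensional wedge matches the Brownian one; the technical points are that we start at the apex rather than in the interior (handled by first running the walk for a bounded time, or conditioning on its position after a fixed number of steps) and that only $2+\eta$ moments are assumed. Since only the one-sided survival bound is needed, large jumps can only help the walk leave the wedge, which makes this the favourable direction.

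Granting $\P(W>t)\le Ct^{-1/3}$, the index $\alpha=\tfrac13<1$ makes $\sigma_\ell$ big-jump dominated, and the estimate follows by truncation at level $n$:
\[
\P(\sigma_\ell>n)\ \le\ \ell\,\P(W>n)+\P\Big(\sum_{i=1}^{\ell}W_i\,\mathbbm{1}_{\{W_i\le n\}}>n\Big).
\]
For the first term, $\ell\,\P(W>n)\le C\eps n^{1/3}\cdot n^{-1/3}=C\eps$. For the second, the truncated mean satisfies $\E[W\,\mathbbm{1}_{\{W\le n\}}]\le\int_0^n\P(W>t)\,dt\le Cn^{2/3}$, whence $\E\big[\sum_{i\le\ell}W_i\,\mathbbm{1}_{\{W_i\le n\}}\big]\le\ell Cn^{2/3}=C\eps n$, and Markov's inequality bounds the second term by $C\eps$. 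Combining the two gives $\P(\LIS(S|_{[0,n)})<\eps n^{1/3})\le\P(\sigma_\ell>n)\le c\eps$, as required, with the universal constant $c$ absorbing the trivial small-$\eps$ and small-$n$ cases.
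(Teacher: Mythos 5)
Your proposal follows essentially the same route as the paper: the greedy joint-ladder subsequence with i.i.d.\ increments distributed as the quadrant hitting time $\tau$, the tail bound $\P(\tau>t)=O(t^{-1/3})$ imported from Denisov and Wachtel's work on random walks in cones, and the truncation-plus-Markov argument for the sum. The paper simply cites the Denisov--Wachtel asymptotic $\P(\tau>n)\sim cn^{-1/3}$ as a black box rather than re-deriving the persistence estimate you flag as the main obstacle, but otherwise the arguments coincide.
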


Finally, in \cref{s:open} we state some open questions.

\subsection*{Acknowledgments}

OA thanks the organizers of the {\em probability, combinatorics and
geometry} meeting at the Bellairs Institute, as well as several
participants, in particular Simon Griffiths who proposed this problem, and
Louigi Addario-Berry, Guillaume Chapuy, Luc Devroye, G\'abor Lugosi and Neil
Olver for useful discussions.  The present collaboration took place mainly
during visits of OA and RB to Microsoft Research.  We are indebted to
Andr\'as M\'ath\'e and Boris Solomyak for useful suggestions.

\section{Upper bound}

To simplify notations in the proof, it is convenient to assume the length
of the random walk is a power of $4$.  By monotonicity in $n$ of
$\LIS(S|_{[0,n]})$, we can interpolate for all other $n$.  Our main result
\cref{T:UB} follows from the following by this monotonicity and the
substitution $4^n \to n$.


\begin{thm}\label{T:UB2}
  Let $S(n)=\sum_{i=1}^{n} X_i$ be a random walk with $\E(X_1)=0$ and
  $\Var(X_1)=1$.  For all $n$ large enough for all $\ell \in \N^+$ we
  have
  \begin{equation*}
  \P\left(\LIS(S|_{[0,4^n)})\geq \ell 2^{n+4\sqrt{\smash[b]{n\log_2 n}}} \right)\leq
  \left(\frac{2}{n^2}\right)^\ell.
  \end{equation*}
  Moreover, if $E e^{t|X_1|}<\infty$ for some $t>0$ then we have
  \[
  \P\left(\LIS(S|_{[0,4^n)})\geq \ell 2^{n+4\sqrt{\smash[b]{n\log_2 n}}} \right)\leq
  2^{-\ell n}.
  \]
\end{thm}

The main goal of this section is to prove \cref{T:UB2}.
The key is a
multi-scale argument, the time up to $4^n$ is split into $4^k$ intervals.
We consider the number of these intervals that intersect our set $A$, as well as the sizes of
intersections.  Repeating this allows us to get (inductively) better and
better bounds. The dependence on the randomness of the walk is done through
some estimates on the local time, which we derive in the following
subsection.

Throughout this section, fix a random walk $S(n)=\sum_{i=1}^{n}X_i$ with
$\E X_1=0$ and $\Var(X_1)=1$.  Various constants below depend only on the
law of $X_i$.  We will use the following theorems in this section.

\begin{theorem}[Petrov,~\cite{P}] \label{t:P}
  There is a constant $c$ such that for all $n\in \N^+$ and $\lambda\geq 0$
  we have
  \[
  \sup_{x\in \R} \P(x\leq S(n)\leq x+\lambda) \leq
  c \frac{\lambda+1}{\sqrt{n}}.
  \]
\end{theorem}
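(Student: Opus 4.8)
The plan is to work through the Lévy concentration function $Q(Y;\lambda)=\sup_{x\in\R}\P(x\le Y\le x+\lambda)$, so that the claim reads $Q(S(n);\lambda)\le c(\lambda+1)/\sqrt n$, and to control it via the characteristic function $\phi(t)=\E e^{itX_1}$. Since the steps are i.i.d., the characteristic function of $S(n)$ is $\phi^n$, and everything reduces to how $\phi$ behaves near and away from the origin.

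First I would reduce to a single scale. The concentration function is increasing in $\lambda$ and subadditive under covering: any interval of length $\lambda$ is covered by $\lceil\lambda\rceil$ intervals of length $1$, so $Q(Y;\lambda)\le\lceil\lambda\rceil\,Q(Y;1)\le(\lambda+1)Q(Y;1)$ for every $\lambda>0$ (the case $\lambda<1$ being trivial). Hence it suffices to prove $Q(S(n);1)\le c/\sqrt n$, from which the stated bound follows. Next I would invoke Esseen's concentration-function inequality, which provides an absolute constant $C$ with $Q(Y;1)\le C\int_{-1}^{1}|\phi_Y(t)|\,dt$; this is itself obtained by dominating the indicator of a unit interval by a Fejér-type kernel whose Fourier transform is supported in $[-1,1]$ and applying Parseval. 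Applied to $Y=S(n)$ this gives $Q(S(n);1)\le C\int_{-1}^{1}|\phi(t)|^{n}\,dt$, reducing the theorem to the estimate $\int_{-1}^{1}|\phi(t)|^{n}\,dt=O(1/\sqrt n)$.

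The decay originates near $t=0$. Because $\E X_1=0$ and $\Var(X_1)=1$, the second-order expansion $\phi(t)=1-\tfrac12 t^2+o(t^2)$ holds, so there is $\delta>0$ with $|\phi(t)|\le e^{-t^2/4}$ for $|t|\le\delta$, whence $\int_{|t|\le\delta}|\phi(t)|^{n}\,dt\le\int_{\R}e^{-nt^2/4}\,dt=O(1/\sqrt n)$. The main obstacle is the complementary region $\delta\le|t|\le1$, where one must prevent $|\phi|$ from lingering near $1$; this is exactly where the lattice/non-lattice dichotomy enters. If $X_1$ is non-lattice, then $|\phi(t)|<1$ for all $t\neq0$ and, by compactness, $\rho:=\sup_{\delta\le|t|\le1}|\phi(t)|<1$, so this region contributes at most $2\rho^{n}$, which is negligible compared with $1/\sqrt n$. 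If instead $X_1$ is supported on a coset of $h\Z$ with span $h$, then $|\phi|$ is periodic with period $2\pi/h$ and equals $1$ only at the finitely many points of $\tfrac{2\pi}{h}\Z\cap[-1,1]$; near each such peak $|\phi|^{n}$ decays like a Gaussian of width $\sim1/\sqrt n$ by the same second-order expansion (translated using periodicity), contributing $O(1/\sqrt n)$.

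Summing over the peaks, whose number depends only on $h$, bounds $\int_{-1}^{1}|\phi(t)|^{n}\,dt$ by $c/\sqrt n$ with $c$ depending only on the law of $X_1$; the dependence of $c$ on the span $h$ is precisely what the additive $+1$ in $\lambda+1$ is designed to absorb, since for a lattice walk the concentration on a single atom is already of order $h/\sqrt n$. Combining the three regimes yields $Q(S(n);1)\le c/\sqrt n$ and hence the theorem. I expect the lattice case, and the bookkeeping of the finitely many peaks of $|\phi|$ on $[\delta,1]$, to be the only delicate point; the non-lattice case and the near-origin estimate are routine.
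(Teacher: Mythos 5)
The paper does not prove this statement at all: it is quoted as an external result from Petrov's 1970 paper \cite{P}, so there is no internal proof to compare against. Your sketch follows the standard (and essentially Petrov's own) route — reduce to $\lambda=1$ by subadditivity of the concentration function, apply Esseen's inequality $Q(S(n);1)\leq C\int_{-1}^{1}|\phi(t)|^{n}\,dt$, and split the integral into the near-origin Gaussian regime and the region $\delta\leq|t|\leq 1$, handled by the lattice/non-lattice dichotomy — and the outline is correct, with the resulting constant depending on the law of $X_1$ exactly as the paper permits. One small correction: the $+1$ in $\lambda+1$ is not ``absorbing the dependence on the span''; it is needed simply because for lattice walks a single atom already carries mass of order $1/\sqrt{n}$, so no bound proportional to $\lambda$ alone could hold, while the law-dependence of $c$ enters through $\delta$, $\rho$, and the number of peaks of $|\phi|$ in $[-1,1]$.
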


For the following theorems see \cite[Thm.~A.2.5]{LL} and its corollaries.

\begin{theorem} \label{t:lambda1}
  For all $n\in \N^+$ and $\lambda>0$ we have
  \[
  \P\left(\max_{0\leq i\leq n} |S(i)|\geq \lambda \sqrt{n}\right) \leq
  \frac{1}{\lambda^2}.
  \]
\end{theorem}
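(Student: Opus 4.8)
The plan is to recognize Theorem~\ref{t:lambda1} as Kolmogorov's maximal inequality for the martingale $S$, and to prove it via a first-passage (stopping time) decomposition of the second moment. Since the increments $X_i$ are independent with $\E X_1=0$, the partial sums $S(i)=\sum_{j=1}^i X_j$ form a martingale with respect to $\mathcal{F}_i=\sigma(X_1,\dots,X_i)$, and orthogonality of the increments gives $\E[S(n)^2]=\sum_{i=1}^n \E X_i^2 = n$ because $\Var(X_1)=1$. The goal is to show that this single number $n$ already controls the probability that $|S|$ ever reaches the level $\lambda\sqrt{n}$, so the whole argument amounts to converting the variance bound into a maximal bound. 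Note that for $\lambda\le 1$ the claimed bound $\lambda^{-2}\ge 1$ is trivial, so the content is in the regime $\lambda>1$, although the argument runs uniformly.

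First I would introduce the first-passage time
\[
\tau = \min\{0\le i\le n : |S(i)|\ge \lambda\sqrt{n}\},
\]
with the convention $\tau=\infty$ if the walk stays strictly below the level throughout. The event in the statement is exactly $\{\tau\le n\}$, and $\{\tau=k\}\in\mathcal{F}_k$ for each $k$. I would then decompose $\E[S(n)^2]$ according to the value of $\tau$. On the event $\{\tau=k\}$, writing $S(n)=S(k)+(S(n)-S(k))$ and using that the future increment $S(n)-S(k)$ is independent of $\mathcal{F}_k$ with mean zero, the cross term $2\,\E[S(k)(S(n)-S(k))\mathbf{1}_{\{\tau=k\}}]$ vanishes, while the term $\E[(S(n)-S(k))^2\mathbf{1}_{\{\tau=k\}}]$ is nonnegative. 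Hence
\[
\E\big[S(n)^2\,\mathbf{1}_{\{\tau=k\}}\big]\ \ge\ \E\big[S(k)^2\,\mathbf{1}_{\{\tau=k\}}\big]\ \ge\ \lambda^2 n\,\P(\tau=k),
\]
where the last inequality holds because $|S(k)|\ge\lambda\sqrt{n}$ on $\{\tau=k\}$.

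Summing over $0\le k\le n$ and discarding the nonnegative contribution of $\{\tau=\infty\}$ gives
\[
n=\E[S(n)^2]\ \ge\ \sum_{k=0}^{n}\E\big[S(n)^2\mathbf{1}_{\{\tau=k\}}\big]\ \ge\ \lambda^2 n\,\P(\tau\le n),
\]
and dividing by $\lambda^2 n$ yields $\P(\tau\le n)\le \lambda^{-2}$, which is the assertion. There is no genuine obstacle, since the result is classical; the only point requiring care is the vanishing of the cross term, which relies precisely on the independence of the post-$\tau$ increments from $\mathcal{F}_\tau$ together with the square-integrability supplied by $\Var(X_1)=1$. The decomposition must be organized over the stopping time rather than attempted term by term, since $\max_{i\le n}S(i)^2$ is not something one can bound by naive summation of increments.
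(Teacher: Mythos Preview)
Your proof is correct: this is the standard derivation of Kolmogorov's maximal inequality via the first-passage stopping time, and every step is justified. The paper does not give its own proof of this theorem; it simply cites it from Lawler--Limic \cite{LL}, so there is nothing to compare against beyond noting that your argument is the classical one underlying that reference.
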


\begin{theorem}\label{t:lambda2}
  Assume that $\E e^{t|X_1|}<\infty$ for some $t>0$. Then there is a
  constant $c>0$ such that for all $n\in \N^+$ and $0\leq \lambda\leq
  \sqrt{n}$ we have
  \[
  \P\left(\max_{0\leq i\leq n} |S(i)|\geq \lambda \sqrt{n}\right) \leq
  e^{-c\lambda^2}.
  \]
\end{theorem}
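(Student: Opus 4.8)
The plan is a standard exponential Chebyshev (Chernoff) bound fed into Doob's maximal inequality. The first step is to convert the exponential moment hypothesis into a sub-Gaussian bound on the cumulant generating function $\psi(\theta)=\log \E e^{\theta X_1}$. Since $\E e^{t|X_1|}<\infty$ for some $t>0$, $\psi$ is finite and infinitely differentiable on a neighbourhood of the origin, and differentiating under the expectation gives $\psi(0)=0$, $\psi'(0)=\E X_1=0$ and $\psi''(0)=\Var(X_1)=1$. A second-order Taylor expansion then produces constants $t_0>0$ and $K>0$ with $\psi(\theta)\le K\theta^2$ for all $0\le \theta\le t_0$.

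For any $0<\theta\le t_0$ the process $(e^{\theta S(i)})_{i\ge 0}$ is a nonnegative submartingale, since $x\mapsto e^{\theta x}$ is convex and $S$ is a mean-zero (hence martingale) walk whose increments are integrable under this tilt. Doob's maximal inequality together with $\E e^{\theta S(n)}=(\E e^{\theta X_1})^n=e^{n\psi(\theta)}$ yields
\[
\P\Big(\max_{0\le i\le n} S(i)\ge \lambda\sqrt n\Big)
=\P\Big(\max_{0\le i\le n} e^{\theta S(i)}\ge e^{\theta\lambda\sqrt n}\Big)
\le e^{-\theta\lambda\sqrt n}\,\E e^{\theta S(n)}
\le e^{-\theta\lambda\sqrt n+Kn\theta^2}.
\]
It remains to optimise in $\theta$. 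The unconstrained minimiser of the exponent is $\theta^\ast=\lambda/(2K\sqrt n)$, and as $\lambda$ ranges over $[0,\sqrt n]$ this stays in $[0,1/(2K)]$; I therefore take $\theta=\min\{\theta^\ast,t_0\}$ so that the tilt never leaves the interval on which the bound for $\psi$ is valid. When $\theta=\theta^\ast$ the exponent equals $-\lambda^2/(4K)$. When instead $\theta=t_0$ we have $\lambda>2Kt_0\sqrt n$, so $\sqrt n<\lambda/(2Kt_0)$ and hence $Knt_0^2\le \tfrac12 t_0\lambda\sqrt n$; the exponent is then at most $-\tfrac12 t_0\lambda\sqrt n\le -\tfrac12 t_0\lambda^2$, the last step using $\lambda\le\sqrt n$. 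In both cases $\P(\max_i S(i)\ge\lambda\sqrt n)\le e^{-c_0\lambda^2}$ with $c_0=\min\{1/(4K),\,t_0/2\}$, valid for every $\lambda\in[0,\sqrt n]$.

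Finally I pass to the two-sided maximum. Because $-S$ is again a mean-zero walk satisfying the same exponential moment bound, the estimate above applies verbatim to $\max_i(-S(i))$, and a union bound gives $\P(\max_i|S(i)|\ge\lambda\sqrt n)\le 2e^{-c_0\lambda^2}$; the factor $2$ is absorbed into the exponent by slightly decreasing the constant, which yields the stated form $e^{-c\lambda^2}$. The unconditional \cref{t:lambda1} is obtained from the same type of argument applied at the level of second moments (Kolmogorov's inequality), which needs no exponential moment but only delivers the weaker, slowly decaying bound $1/\lambda^2$.

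I expect the only genuine point of care to be the range management in the optimisation: the sub-Gaussian control of $\psi$ is purely local, so one must verify that the chosen tilt $\theta$ stays in $[0,t_0]$ uniformly as $\lambda$ runs all the way up to $\sqrt n$. This is precisely where the hypothesis $\lambda\le\sqrt n$ is used, and the two-case analysis above is exactly what is needed to cover the full range with a single constant $c$. Everything else—the Taylor estimate for $\psi$, the submartingale property, and the constant bookkeeping for the two-sided bound—is routine.
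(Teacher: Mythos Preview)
Your argument is correct and is essentially the standard proof (sub-Gaussian control of the cumulant generating function near the origin, exponential Chebyshev, Doob's maximal inequality, and a union bound for the two-sided maximum); the paper does not give its own proof of this statement but simply cites \cite[Thm.~A.2.5]{LL}, where the same kind of argument appears. One minor remark: your closing sentence about \cref{t:lambda1} is extraneous here, since that theorem is stated separately and likewise just cited from \cite{LL}.
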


\subsection{Scaled local time estimates}

\begin{defn}
  Let $m,p\in \N$ and $q\in \Z$. A {\bf time interval of order $m$} is of
  the form
  \[
  I_{m,p} = [p 4^m, (p+1)4^m)\subset \N.
  \]
  A {\bf value interval of order $m$} is of the form
  \[
  J_{m,q} = [q 2^m, (q+1) 2^m)\subset \R.
  \]
  Note that a time interval is a subset of $\N$, while a value interval is
  a real interval.  For all $0\leq k\leq m$ let $\iI_{m,k,p}$ be the set of
  time intervals of order $m-k$ contained in $I_{m,p}$.  Clearly
  $|\iI_{m,k,p}|=4^k$.
\end{defn}

\begin{defn}
  The {\bf scaled local time} $S_{m,k,p,q}$ is the number of order $m-k$
  intervals in $\iI_{m,k,p}$ in which $S$ takes at least one value in
  $J_{m-k,q}$:
  \[
  S_{m,k,p,q} = \left|\{I\in\iI_{m,k,p} : \exists x\in I, ~ S(x)\in
  J_{m-k,q}\}\right|.
  \]
\end{defn}

Our intermediate goal is to prove the following uniform estimate on scaled
local times.

\begin{prop}\label{P:local_time_bound}
  There is a $\gamma\geq 2$ such that for all $n$ large enough
  \[
  \P\left(S_{m,k,p,q} \leq \gamma n2^k \textrm{ for all } k\leq m\leq
    n,\, p< 4^{n-m},\, q\in \Z \right) \geq 1 - 2^{-(n+1)}.
  \]
\end{prop}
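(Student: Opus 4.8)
The plan is to establish the pointwise tail bound $\P\!\left(S_{m,k,p,q}\geq \gamma n 2^k\right)\leq e^{-c\gamma n}$ for each fixed quadruple $(m,k,p,q)$ by the method of moments, and then to take a union bound over all admissible quadruples. The only input involving the step distribution will be Petrov's theorem (\cref{t:P}) together with the maximal inequality \cref{t:lambda1}; in particular, no exponential-moment assumption enters, which is why the proposition holds in the full finite-variance generality. Fix $(m,k,p,q)$ and abbreviate $a=2^{m-k}$, $\tau=4^{m-k}=a^2$, and $N=|\iI_{m,k,p}|=4^k$, so that $2^k=\sqrt N$ and the target $\gamma n2^k=\gamma n\sqrt N$ is of order $\sqrt N$ up to the logarithmic-in-horizon factor $\gamma n$. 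Writing $S_{m,k,p,q}=\sum_{I}\mathbbm{1}[I \text{ is hit}]$, where ``$I$ is hit'' means the walk takes a value in $J_{m-k,q}$ during the order-$(m-k)$ interval $I\in\iI_{m,k,p}$, I will bound the integer moments $\E\big[S_{m,k,p,q}^r\big]$ for all $r\in\N^+$.

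The combinatorial core is a uniform \emph{conditional hitting estimate}: if the walk is in the band $J_{m-k,q}$ at some stopping time and an interval $I'\in\iI_{m,k,p}$ begins a time $\geq (g-1)\tau$ later (with $g\geq 1$ the index gap), then conditionally on the past the probability that $I'$ is hit is at most $C/\sqrt{g}$ for an absolute constant $C$. I would prove this by the strong Markov property, reducing to the probability that a walk started in a band of width $a$ returns to a band of width $2a$ during a window of length $\tau$ starting at relative time $\geq(g-1)\tau$. The delicate point is that hitting the band \emph{at some time} inside the window is a maximum-type event; I would convert it to an endpoint event by a reflection-type argument, bounding it by twice the probability that the walk lies in the $O(a)$-enlarged band at the right endpoint of the window, where the overshoot is controlled by \cref{t:lambda1} with a \emph{fixed} cutoff (costing only a constant factor, and crucially requiring no union bound over oscillations). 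Petrov's theorem then bounds the endpoint probability by $C(a+1)/\sqrt{g\tau}=O(1/\sqrt g)$ since $\sqrt\tau=a$.

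Granting this estimate, I expand $\E\big[S_{m,k,p,q}^r\big]$ over ordered tuples $I_{(1)}<\dots<I_{(r)}$ of hit intervals and apply the conditional hitting estimate successively at the first band-visit at or after the start of each $I_{(i)}$ (a genuine stopping time at which the walk is in the band). With the first factor bounded by $1$ and the remaining factors by $C/\sqrt{g_i}$, summing over gap compositions gives $\E\big[S_{m,k,p,q}^r\big]\leq r!\,\big(\sum_{d=1}^{N}C/\sqrt d\big)^r\leq r!\,(C'2^k)^r$ (the diagonal/tie terms are lower order and absorbed into $C'$). Markov's inequality then yields $\P\!\left(S_{m,k,p,q}\geq \gamma n 2^k\right)\leq r!\,(C'/(\gamma n))^r\leq (C'r/(\gamma n))^r$, and optimizing over integer $r$ with $r\asymp \gamma n$ produces the bound $e^{-c\gamma n}$.

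Finally, to obtain the uniform statement I would intersect with the event that $\max_{0\leq i<4^n}|S(i)|$ is at most a fixed power of $2^n$, which by \cref{t:lambda1} fails with probability $\leq 2^{-(n+2)}$ and, on its complement, confines the relevant band indices $q$ at each scale; a direct count then shows there are at most $2^{O(n)}$ quadruples $(m,k,p,q)$ to consider. Choosing $\gamma\geq 2$ large enough that $e^{-c\gamma n}\,2^{O(n)}\leq 2^{-(n+2)}$ for all large $n$, the union bound delivers the asserted probability $1-2^{-(n+1)}$. I expect the main obstacle to be the conditional hitting estimate, namely controlling the mid-interval (maximum-type) hitting probability uniformly in the gap $g$ and in the scale $m-k$ using only finite-variance tools; once that estimate is in hand, the passage to moments, the tail bound, and the union bound are routine.
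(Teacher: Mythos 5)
Your proposal is correct and shares the paper's overall architecture (a per-quadruple exponential tail bound followed by a union bound in which \cref{t:lambda1} confines the relevant $q$ to $2^{O(n)}$ values), but it upgrades the first-moment information to an exponential tail by a genuinely different mechanism. The paper proves only the unconditional expectation bound $\E_x S_{m,k,p,q}\leq c2^k$ uniformly in the starting point (\cref{L:ES}) and then gets the geometric tail $\P_x(S_{m,k,p,q}\geq C\ell 2^k)\leq 2^{-\ell}$ by a restart argument (\cref{L:s_tail}): run until $C\ell 2^k$ sub-intervals have contributed, apply the strong Markov property, and use Markov's inequality to see that accumulating $C2^k$ further contributions has conditional probability at most $\tfrac12$. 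You instead prove a conditional hitting estimate at band-visit stopping times, expand the $r$-th moment over ordered tuples of hit intervals, and optimize Markov's inequality over $r$; this costs more bookkeeping and requires the stronger conditional form of the hitting estimate, but it is sound and yields the same quality of tail ($e^{-c\gamma n}$ with $c$ independent of $\gamma$, so $\gamma$ large beats the $2^{O(n)}$ quadruple count). Two small points: your device for converting the mid-window (maximum-type) hitting event into an endpoint event is not a literal reflection principle (unavailable for general finite-variance steps), but the fix you describe --- a fixed-cutoff application of \cref{t:lambda1} after the band-hitting time followed by Petrov's bound at the window's right endpoint --- is exactly the ``still nearby at the end'' trick the paper uses in \cref{L:ES}, so this is fine; and bounding the first factor in the moment expansion by $1$ gives $\E[S_{m,k,p,q}^r]\leq r!\,4^k(C'2^k)^{r-1}$ rather than $r!\,(C'2^k)^r$, but the stray $2^k\leq 2^n$ is absorbed into the $2^{O(n)}$ union bound, so the conclusion is unaffected (and summing the first factor as $C/\sqrt{j_1}$ recovers your stated bound anyway).
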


We begin with an estimate on the expectation of a single scaled local time.
Let $\E_x$ and $\P_x$ denote the expectation and probability for a random
walk started at $x$.

\begin{lemma}\label{L:ES}
  For some absolute constant $c$ and for all $x,m,k,p,q$ we have
  \[
  \E_x S_{m,k,p,q} \leq c 2^k.
  \]
\end{lemma}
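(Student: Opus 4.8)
The plan is to write the scaled local time as a sum of indicators over the $4^k$ sub-intervals making up $\iI_{m,k,p}$ and to bound each term by conditioning on the value of the walk when that sub-interval begins. Enumerate $\iI_{m,k,p}=\{I_0,\dots,I_{4^k-1}\}$, where $I_j$ starts at time $\tau_j=(p4^k+j)4^{m-k}$, and set $Y_j=S(\tau_j)$. Then
\[
\E_x S_{m,k,p,q}=\sum_{j=0}^{4^k-1}\P_x\bigl(S\text{ takes a value in }J_{m-k,q}\text{ during }I_j\bigr).
\]
The crucial feature of the scaling is that each $I_j$ has length $4^{m-k}=(2^{m-k})^2$, the square of the width $2^{m-k}$ of the value interval $J_{m-k,q}$; this is what keeps all the powers of $2^{m-k}$ canceling below.

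For a fixed $j$ I condition on $Y_j$ and set $d_j=\operatorname{dist}(Y_j,J_{m-k,q})$. By the Markov property the walk on $I_j$ is, given $Y_j$, a fresh walk of length $4^{m-k}$ started at $Y_j$, and entering $J_{m-k,q}$ forces it to move by at least $d_j$; so \cref{t:lambda1} applied with $\lambda=d_j/2^{m-k}$ gives
\[
\P_x\bigl(S\in J_{m-k,q}\text{ during }I_j \mid Y_j\bigr)\leq \min\Bigl(1,\tfrac{4^{m-k}}{d_j^2}\Bigr).
\]
I then split the value line into bands: band $0$ is $J_{m-k,q}$ itself, and band $i\geq 1$ consists of the two intervals of width $2^{m-k}$ at distance in $[(i-1)2^{m-k},\,i2^{m-k})$, on which $d_j\geq (i-1)2^{m-k}$ and hence the right-hand side above is at most $(i-1)^{-2}$ for $i\geq 2$ (and at most $1$ for $i\in\{0,1\}$). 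Since each band has width at most $2^{m-k+1}$, \cref{t:P} bounds $\P_x(Y_j\in\text{band }i)$ by $c\,2^{m-k}/\sqrt{\tau_j}$. Summing over bands, the series $2+\sum_{i\geq 2}(i-1)^{-2}$ converges, so
\[
\P_x\bigl(S\in J_{m-k,q}\text{ during }I_j\bigr)\leq c'\,\frac{2^{m-k}}{\sqrt{\tau_j}}.
\]

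Finally I sum over $j$. Since $\tau_j=(p4^k+j)4^{m-k}$, we have $2^{m-k}/\sqrt{\tau_j}=1/\sqrt{p4^k+j}$, whence
\[
\E_x S_{m,k,p,q}\leq c'\sum_{j=0}^{4^k-1}\frac{1}{\sqrt{p4^k+j}}.
\]
When $p\geq 1$ each of the $4^k$ terms is at most $(p4^k)^{-1/2}$, giving a bound of $c'2^k/\sqrt{p}\leq c'2^k$. When $p=0$ the $j=0$ term is degenerate (here $\tau_0=0$ and $Y_0=x$ is deterministic), so I bound that single term by $1$ and use $\sum_{j=1}^{4^k-1}j^{-1/2}\leq 2\cdot 2^k$ for the remaining terms. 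Either way $\E_x S_{m,k,p,q}\leq c\,2^k$, as claimed. The only delicate points are the degenerate boundary term at $\tau_0=0$ and verifying that the band decomposition yields a convergent series carrying exactly one factor of $2^{m-k}$, which is precisely what the matching of time scale $4^{m-k}$ to value width $2^{m-k}$ guarantees.
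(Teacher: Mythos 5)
Your proof is correct, but it reverses the direction of the paper's conditioning, so the two arguments are genuinely different. The paper also reduces to a per-sub-interval bound of order $i^{-1/2}$, but it gets there by conditioning \emph{forward} on the event $B_i$ that sub-interval $i$ contributes: using the central limit theorem it finds $c_1>0$ and $N$ such that, given $B_i$, the walk ends the sub-interval within $N+1$ value intervals of the target with probability at least $c_1$, so $\P_x(B_i)\leq c_1^{-1}\P_x(A_i)$, and then \cref{t:P} bounds $\P_x(A_i)$ by $c_2(N+1)/\sqrt{i}$. You instead condition \emph{backward} on the position $Y_j$ at the start of the sub-interval, use the maximal inequality (\cref{t:lambda1}) to bound the chance of reaching $J_{m-k,q}$ from distance $d_j$, and sum over bands with \cref{t:P} controlling the probability of landing in each band; the convergent series $\sum (i-1)^{-2}$ replaces the paper's constant $c_1$. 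Your route is slightly longer but entirely quantitative --- it needs no qualitative CLT input and no choice of $N$ --- and it makes the role of the diffusive matching of time scale $4^{m-k}$ to value width $2^{m-k}$ explicit; the paper's route is shorter once one accepts the ``if you visit, you are probably still nearby at the end'' trick, and its reduction to $p=q=0$ by the strong Markov property lets it avoid your separate treatment of the degenerate $j=0$ term. Both yield the stated bound $c\,2^k$.
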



\begin{proof}
  The proof uses the idea that conditioned on the event that some time
  interval contributes to $S_{m,k,p,q}$, with
 probability bounded from $0$,
  the random walk is still nearby at the end of the time interval.

  The strong Markov property of the walk $S$ and translation invariance
  imply that it is enough to
consider the case $p=q=0$.  Since the central
  limit theorem yields
  $\lim_{j\to \infty} \P_0(0\leq S(j)\leq \sqrt{j})>0$, there exist $c_1>0$
  and $N\in \N$ such that for all $j\geq 1$ we have
  \begin{equation*}
    \P_0(0\leq S(j)\leq N\sqrt{j})\geq c_1.
  \end{equation*}
  For each $1\leq i\leq 4^k$ let $A_i$ be the event that
  $S(i4^{m-k})\in \bigcup_{j=0}^{N} J_{m-k,j}$ and let $B_i$ be the event

that $I_{m-k,i-1}$ counts towards $S_{m,k,0,0}$.  By the above inequality,
  for all $i$ we have

\begin{equation*}
    \P_x(A_i \,|\, B_i)\geq c_1.
  \end{equation*}
  By \cref{t:P} there exists a constant $c_2$ such that for all $x\in \R$
  and $i$
we have
  \begin{equation*}
    \P_x(A_i) \leq \frac{c_2(N+1) 2^{m-k}}{\sqrt{i 4^{m-k}}}=\frac{c_2(N+1)}{\sqrt{i}}.
  \end{equation*}
The above two inequalities imply that
\begin{equation*} \E_x S_{m,k,0,0} =\sum_{i=1}^{4^k} \P_x(B_i)\leq \sum_{i=1}^{4^k}
\frac{c_2(N+1)}{c_1\sqrt{i}} \leq c 2^k,
\end{equation*}
where $c=2c_2(N+1)/c_1$. The proof is complete.
\end{proof}

Next we estimate the tail of a single scaled local time.

\begin{lemma}\label{L:s_tail}
  There is an absolute constant $C$ such that for all $x,m,k,p,q$ and
  $\ell\in \N^+$
  we have
  \[
  \P_x\left(S_{m,k,p,q} \geq C \ell 2^k \right) \leq 2^{-\ell}.
  \]
\end{lemma}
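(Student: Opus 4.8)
The plan is to bootstrap the first-moment bound of \cref{L:ES} into an exponential tail via a regeneration argument driven by the strong Markov property; the essential feature we exploit is that the bound in \cref{L:ES} is \emph{uniform over the starting point} $x$. Fix $m,k,p,q$ and let $c$ be the constant from \cref{L:ES}. Set the block size $M=\lceil 2c\,2^k\rceil$, so that $c2^k/M\le \tfrac12$. For a grid time $t$ (a multiple of $4^{m-k}$ lying in $I_{m,p}$), note that whether a given order $m-k$ interval $I_{m-k,r}\subset I_{m,p}$ contributes to $S_{m,k,p,q}$ is determined once the walk has traversed $I_{m-k,r}$, hence is measurable with respect to the walk up to the right endpoint of $I_{m-k,r}$. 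Thus the partial count of contributing intervals completed by time $t$ is $\mathcal F_t$-measurable, and for $j\ge 1$ the first grid time $u_j$ at which this partial count reaches $jM$ is a stopping time; we set $u_0=p4^m$. On the event $\{S_{m,k,p,q}\ge jM\}$ the time $u_j$ is finite and at most $(p+1)4^m$.

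Now decompose the target count into $\ell$ successive blocks. Let $E_j$ be the event that the count of contributing intervals increases by at least $M$ between time $u_{j-1}$ and the end of $I_{m,p}$ (equivalently, that $u_j$ is finite). Since reaching a total of $\ell M$ contributions forces each of the $\ell$ blocks to contribute its share, we have $\{S_{m,k,p,q}\ge \ell M\}\subseteq \bigcap_{j=1}^{\ell}E_j$. The key step is to bound each block conditionally. Fix $j$ and condition on $\mathcal F_{u_{j-1}}$. Because $u_{j-1}$ is grid-aligned, the strong Markov property says that $(S(u_{j-1}+i))_{i\ge 0}$ is a random walk started at $y:=S(u_{j-1})$, and the order $m-k$ intervals of $\iI_{m,k,p}$ lying after $u_{j-1}$ map exactly onto an initial segment of the grid of this shifted walk. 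Consequently the number of contributing intervals occurring after $u_{j-1}$ is dominated by the scaled local time of the shifted walk over a sub-collection of $\le 4^k$ grid intervals, whose expectation is at most $c2^k$ by \cref{L:ES} applied with starting point $y$. Markov's inequality then gives
\[
\P_x\!\left(E_j \,\middle|\, \mathcal F_{u_{j-1}}\right)\le \frac{c2^k}{M}\le \frac12 .
\]

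Multiplying these conditional estimates over $j=1,\dots,\ell$ yields $\P_x(S_{m,k,p,q}\ge \ell M)\le 2^{-\ell}$, and since $M\le C2^k$ for $C:=3c$ (say), the claim follows after renaming the constant. I expect the only genuinely delicate point to be the stochastic-domination step: one must verify that, conditionally on $\mathcal F_{u_{j-1}}$, the further contributing intervals correspond to visits of the shifted walk to the fixed value interval $J_{m-k,q}$ on grid-aligned intervals, so that their number is indeed bounded by a fresh scaled local time and the uniform bound of \cref{L:ES} applies regardless of the (uncontrolled) value $y=S(u_{j-1})$. Aligning the regeneration times $u_j$ with the time grid — rather than with the hitting times of $J_{m-k,q}$ — is precisely what avoids an off-by-one overlap between mis-aligned intervals and keeps the constant clean.
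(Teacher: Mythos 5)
Your proposal is correct and follows essentially the same route as the paper: the paper also bootstraps the uniform-in-$x$ first-moment bound of \cref{L:ES} via Markov's inequality and the strong Markov property at the (grid-aligned) time when a multiple of $C2^k$ sub-intervals have contributed, phrased as an induction on $\ell$ rather than your explicit product over blocks. The "delicate point" you flag — that the residual contributing intervals are dominated by a fresh scaled local time of the restarted walk — is exactly the step the paper's proof also relies on (and passes over with the same brevity).
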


\begin{proof}
  Let $C=\lceil 2c \rceil$, where $c$ is the constant of \cref{L:ES}
  and $\lceil\cdot\rceil$ denotes rounding up.  By Markov's inequality we
  have $\P_x(S_{m,k,p,q} \geq C 2^k) \leq 1/2$, establishing the claim
  for $\ell=1$.  We proceed inductively: Assume that the claim holds for
  some $\ell\geq 1$.  Observe the walk starting at time $p 4^m$ either
  until we reach time $(p+1)4^m$ or until $C\ell 2^k$ sub-intervals of
  order $m-k$ contribute to $S_{m,k,p,q}$.  The latter happens with
  probability at most $2^{-\ell}$.  By the strong Markov property the
  conditional probability that there are $C 2^k$ additional sub-intervals
  contributing to $S_{m,k,p,q}$ is at most $1/2$, proving the claim
  for $\ell+1$.
\end{proof}

\begin{proof}[Proof of \cref{P:local_time_bound}]
  Let $\gamma=7C$, where $C$ is the constant of \cref{L:s_tail}. We
  apply \cref{L:s_tail} with $\ell=7n$ to each of the relevant $m,k,p,q$.
  Since $0\leq k\leq m\leq n$, there are $n+1$ choices for each of $m$ and
  $k$. As $p\in[0,4^{n-m})$, there are at most $4^n$ options for $p$.
  If $\max_{0\leq i\leq 4^n} |S(i)|<2^{2n}$ then $q$ with $|q|\geq
  2^{2n}+1$ have scaled local time $0$.  This is likely, as
  \cref{t:lambda1} yields that $\P(\max_{0\leq i\leq 4^n} |S(i)|\geq
  2^{2n})\leq 2^{-2n}$.  These imply that for
  all $n$ large enough we have
  \begin{align*}
    \P\left( \exists m,k,p,q, \text{ s.t. } S_{m,k,p,q} > 7C n 2^k \right)
    &\leq (n+1)^2 4^n2^{2n+2} 2^{-7n}+2^{-2n}\\
    &\leq  2\cdot 2^{-2n}\leq 2^{-(n+1)}.
  \end{align*}
  Clearly we may also require $\gamma\geq 2$.
\end{proof}

\subsection{No long increasing subsequence}

Next, we use \cref{P:local_time_bound} to rule out the existence of very long
increasing subsequences in the random walk. We need the following definition.

\begin{defn}
  Let $S$ be a function and let $A=\{a_1,\dots,a_k\}$ be a finite set such
  that $a_1<a_2<\dots <a_k$.  The {\bf variation} of $S$ restricted to $A$
  is defined as
  \[
  V^{1}(S|_A)= \sum_{i=1}^{k-1} |S(a_{i+1})-S(a_i)|.
  \]
\end{defn}

Note that if $S|_A$ is increasing then $V^{1}(S|_A)$ equals the
diameter of $S(A)$. The upper bound of \cref{T:UB2} follows from the
following proposition.

\begin{prop}\label{P:n_product}
  Fix $n=ab$ with $a,b\in \N^+$. Assume that a walk $S\colon \N \to \R$ is such that
  \begin{enumerate}
  \item \label{as:1} the event of \cref{P:local_time_bound} occurs,
  \item \label{as:2} $\max_{0\leq i\leq 4^n} |S(i)|\leq n 2^n$.
  \end{enumerate}
  Then we have
  \[
  \LIS(S|_{[0,4^n)}) \leq (\gamma n 2^{b+1})^{a+1}.
  \]
\end{prop}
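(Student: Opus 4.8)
The plan is to prove a stronger, diameter‑sensitive statement by induction on $j$, where the time interval has order $jk$. Writing $D(A')=V^{1}(S|_{A'})$ for the diameter of $S(A')$ (recall that for increasing $A'$ the variation equals the diameter), I would show that there are constants $\alpha_j,\beta_j$ such that for every $0\le j\le m$ and every increasing set $A'$ contained in a single time interval of order $jk$,
\[
|A'|\ \le\ \alpha_j\,\frac{D(A')}{2^{jk}}+\beta_j .
\]
The base case $j=0$ is immediate: a time interval of order $0$ is a single integer, so $|A'|\le 1$, and one may take $\alpha_0=0$, $\beta_0=1$. The proposition then follows by applying the case $j=m$ to $A'=A\subseteq[0,4^n)=I_{n,0}$: by the second hypothesis of the proposition ($\max_{0\le i\le 4^n}|S(i)|\le n2^n$) the diameter satisfies $D(A)\le \operatorname{diam} S([0,4^n))\le n2^{n+1}$, so $|A|\le 2n\,\alpha_m+\beta_m$, and it remains to check that the solution of the recursion below makes this at most $(\gamma n2^{k+1})^{m+1}$.

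For the inductive step I would partition the order‑$jk$ time interval into its $4^k$ order‑$(j-1)k$ subintervals $I'_1<\dots<I'_{4^k}$ (the elements of $\iI_{jk,k,p}$), so that $|A'|=\sum_i|A'\cap I'_i|$ and each $A'\cap I'_i$ is increasing. Two monotonicity facts drive the estimate. First, since $A'$ is increasing the value ranges of the blocks are ordered and essentially disjoint, whence the variations telescope: $\sum_i V^{1}(S|_{A'\cap I'_i})\le V^{1}(S|_{A'})=D(A')$. Second, and this is the key point, the number $r$ of \emph{nonempty} subintervals is small: the values $S(A')$ lie in an interval of length $D(A')$, which meets at most $D(A')/2^{(j-1)k}+1$ value intervals of order $(j-1)k$, and by \cref{P:local_time_bound} each such value interval is visited by at most $\gamma n2^k$ of the subintervals, so $r\le\bigl(D(A')/2^{(j-1)k}+1\bigr)\gamma n2^k$. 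Feeding the inductive hypothesis into $|A'|\le\sum_{i\text{ nonempty}}\bigl(\alpha_{j-1}V^{1}(S|_{A'\cap I'_i})/2^{(j-1)k}+\beta_{j-1}\bigr)$ and using these two facts, then collecting the terms proportional to $D(A')/2^{jk}$ and the constant terms separately, yields the recursion $\alpha_j=2^k\alpha_{j-1}+\gamma n2^{2k}\beta_{j-1}$ and $\beta_j=\gamma n2^k\beta_{j-1}$.

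Solving this with $\alpha_0=0,\beta_0=1$ gives $\beta_j=(\gamma n2^k)^j$ and $\alpha_j=2^{k(j+1)}\sum_{i=1}^{j}(\gamma n)^i\le 2^{k(j+1)+1}(\gamma n)^j$ (using $\gamma\ge 2$ to sum the geometric series), and substituting $j=m$ and $km=n$ into $2n\alpha_m+\beta_m$ bounds it by $(\gamma n2^{k+1})^{m+1}$; this final comparison is a routine calculation. I expect the main obstacle to be the second monotonicity fact: the naive bound $r\le 4^k$ on the number of nonempty subintervals is far too weak (it would only give a trivial $4^n$‑type bound), and the whole argument hinges on instead controlling $r$ through the diameter — the diameter caps how many order‑$(j-1)k$ value intervals can be occupied, and only then does the local‑time bound of \cref{P:local_time_bound} limit the subintervals per value interval. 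This is exactly why the induction must carry the diameter as an explicit parameter rather than bound $\LIS$ directly.
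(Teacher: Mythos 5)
Your argument is essentially the paper's proof with the induction run in the opposite direction: the paper iterates top-down on normalized counts $d_\ell$ of intersected order-$(m-\ell)k$ intervals, using exactly the two facts you isolate (the scaled local time bound of \cref{P:local_time_bound} caps the number of nonempty subintervals per value interval, and monotonicity makes the block variations telescope against the global diameter $\leq 2n2^n$), whereas you carry the diameter as an explicit parameter in a bottom-up induction; both routes give the same recursion up to bookkeeping. The only slip is that an interval of length $D$ can meet up to $D/2^{(j-1)k}+2$ (not $+1$) value intervals of order $(j-1)k$; this only perturbs constants in $\alpha_j,\beta_j$ and the final bound still comes in under $(\gamma n 2^{k+1})^{m+1}$ using $\gamma\geq 2$.
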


\begin{proof}
  Let $A\subset[0,4^n)$ be a set such that $S|_A$ is increasing.  For
  $0\leq \ell \leq a$ let
  \[
  D_\ell = \{I\in \iI_{ab,\ell b,0}: I\cap A\neq \emptyset\}  \quad
  \textrm{and} \quad  d_\ell = \frac{|D_\ell|}{(\gamma n 2^{b+1})^\ell}
  \]
  be the set of intervals of order $(a-\ell)b$ that intersect $A$, and its
  size with a convenient normalization.  Clearly $D_a=A$ and $d_0=|D_0|=1$.
  In order to prove the claim we prove inductively bounds on $|D_\ell|$.

  Let $\ell\geq 1$ and index the elements of $D_{\ell-1} =
  \{I_1,I_2,\dots\}$, and suppose that interval $I_i$ contains $p_i$
  intervals in $D_\ell$, so that $|D_\ell| = \sum_{i=1}^{|D_{\ell-1}|}
  p_i$. By assumption~\eqref{as:1} for all $q$ we have that
  $J_{(a-\ell)b,q}$ is visited in at most $\gamma n 2^b$ sub-intervals of
  $I_i$. It follows that if $p_i>\gamma n 2^b$ then $S|_{A\cap I_i}$ must
  visit at least $p_i/(\gamma n 2^b)$ value intervals of order
  $(a-\ell)b$.  The diameter of the union of
  these visited intervals is at least $(p_i/(\gamma n
  2^b)-2)2^{(a-\ell)b}$. This leads to a variation bound
  \begin{equation}\label{eq:var}
    V^{1}(S|_{A\cap I_i}) \geq \left(\frac{p_i}{\gamma n 2^b} - 2\right)
    2^{(a-\ell)b}.
  \end{equation}
Assumption~\eqref{as:2} yields that $V^{1}(S|_A) \leq
  2(n 2^n)\leq \gamma n 2^{n}$. Thus
  \begin{equation}\label{eq:Ii}
    \sum_{i=1}^{|D_{\ell-1}|} V^{1}(S|_{A\cap I_i}) \leq \gamma n 2^n.
  \end{equation}
  Inequalities \eqref{eq:var} and \eqref{eq:Ii} and $n=ab$ imply that
  \[
    \gamma n 2^n \geq \sum_{i=1}^{|D_{\ell-1}|} \left(\frac{p_i}{\gamma n
        2^b} - 2\right)2^{(a-\ell)b}=2^{n-\ell b}
    \left(\sum_{i=1}^{|D_{\ell-1}|} \frac{p_i}{\gamma n 2^b} -
      2|D_{\ell-1}| \right),
    \]
  and therefore
  \begin{equation}\label{eq:D}
    \sum_{i=1}^{|D_{\ell-1}|} \frac{p_i}{\gamma n 2^b} - 2|D_{\ell-1}| \leq
    \gamma n 2^{\ell b}.
  \end{equation}
Using $|D_\ell| = \sum_{i=1}^{|D_{\ell-1}|} p_i$ and dividing \eqref{eq:D} by $2(\gamma n
  2^{b+1})^{\ell-1}$ yields
  \[
  d_\ell - d_{\ell-1} \leq \frac{\gamma n 2^b}{2(2\gamma n)^{\ell-1}} \leq
  \gamma n 2^{b-\ell},
  \]
  where we have used that $\gamma n \geq 1$. As $d_0=1$, the above
  inequality implies
  \[
  d_\ell \leq 1 + \sum_{i=1}^{\ell} \gamma n 2^{b-i} \leq \gamma n 2^{b+1}
  \]
  for every $\ell\leq a$. In particular we get for $\ell=a$
  \[
  |A| = |D_a| = (\gamma n 2^{b+1})^a d_a \leq (\gamma n 2^{b+1})^{a+1}. \qedhere
  \]
\end{proof}

Finally, we use \cref{P:n_product} to derive an estimate on the likelihood
of long increasing subsequences in a random walk.

\begin{proof}[Proof of \cref{T:UB2}]
  First we prove the theorem for $\ell=1$. Let
  \[
  a = \left\lceil \sqrt{n/\log_2 n}  \right\rceil 
  \quad \text{and} \quad
  b = \left\lceil \sqrt{n\log_2 n} \right\rceil,
  \]
  where $\lceil\cdot\rceil$ denotes rounding up. Note that $ab\geq n$. We
  consider $S$ up to time $4^{ab}$.  For $n$ large enough with probability
  $1-2^{-(ab+1)}\geq 1-2^{-(n+1)}$ the event of \cref{P:local_time_bound}
  occurs for $n'=ab$. Moreover, \cref{t:lambda1} implies that
  \[
  \P\left(\max_{i\leq 4^{ab}} \{ |S(i)| \} \geq ab 2^{ab}\right) \leq
  \frac{1}{(ab)^2}\leq \frac{1}{n^2}.
  \]
  Thus with probability at least $1-2^{-(n+1)}-1/n^2\geq
  1-2/n^2$ the conditions and conclusion of \cref{P:n_product} hold
  for $ab$.

  \medskip

  Suppose additionally that $E e^{t|X_1|}<\infty$ for some $t>0$.
  Then \cref{t:lambda2} yields that for some constant $c>0$
  and for all $n$ large enough
  \[
  \P\left(\max_{i\leq 4^{ab}} \{ |S(i)| \}  \geq ab 2^{ab}\right)
  \leq e^{-c(ab)^2}\leq 2^{-(n+1)}.
  \]
  Thus with probability at least $1-2^{-n}$ the conditions and conclusion
  of \cref{P:n_product} hold for $ab$.

  Let $n$ be such that \cref{P:n_product} holds for $ab$.  Since
  $\LIS(S|_{[0,4^n)})$ is increasing in $n$, we obtain that for $n$ large
  enough
  \begin{align*} \label{eq:S04}
    \LIS\big(S|_{[0,4^n)}\big) \leq
    \LIS\big(S|_{[0,4^{ab})}\big)
    & \leq \big(\gamma ab 2^{b+1}\big)^{a+1} = 2^{ab} 2^b \big(2\gamma
    ab\big)^{a+1} \notag \\
    & \leq 2^{n + 3\sqrt{\smash[b]{n\log_2 n}} + O (\sqrt{\smash[b]{n/\log_2 n}})} \\
    & < 2^{n+4\sqrt{\smash[b]{n\log_2 n}}} - 1.
  \end{align*}

  This proves \cref{T:UB2} if $\ell=1$. For the general case fix $n,N\in
  \N^+$, it is enough to prove that for all $\ell \in \N^+$ we have
  \begin{equation} \label{eq:PS}
    \P\left(\LIS(S|_{[0,4^n)})\geq \ell N \right)
    \leq \P\left(\LIS(S|_{[0,4^n)})\geq N \right)^{\ell},
  \end{equation}
  then setting $N=\lfloor 2^{n+4\sqrt{\smash[b]{n\log_2 n}}} \rfloor$ concludes the
  proof, where $\lfloor \cdot \rfloor$ denotes rounding down.  Let
  $T_0=0$.  If $T_i$ is already defined then let $T_{i+1}$ be the
  minimal integer $t$ so that $\LIS(S|_{[T_i,t)}) \geq N$. Since
  $\LIS(S|_{[T_i,t)})$ increases by at most $1$ when incrementing $t$,
  we actually have $\LIS(S|_{[T_i,T_{i+1})}) = N$.  By the strong
  Markov property at $T_i$, we see that $T_{i+1}-T_i$ are i.i.d.\
  copies of $T_1$.  However, $\LIS(S|_{[0,4^n)}) \geq \ell N$ requires
  $T_{i+1}-T_i\leq 4^n$ for all $0\leq i\leq \ell-1$, with
  probability at most $\P\left(\LIS(S|_{[0,4^n)})\geq N \right)^{\ell}$.
  This implies \eqref{eq:PS}, and the proof is complete.
\end{proof}

\section{Lower bound for a simple random walk}

The goal of this section is to prove \cref{T:LB}.  For simplicity, we
present our argument only for the simple random walk on $\Z$.  However, it
seems that the argument should extend with minor changes to any random walk
with bounded integer steps of 0 mean, and finite variance.  The construction
relies on values appearing multiple times in the walk, and fails more
fundamentally if the walk is not supported on multiples of some $\alpha$.

\begin{defn}
  Let $\tau_n$ denote the {\bf hitting time} of $n$ by the simple random
  walk.  Let $\ord_2(x)$ be the {\bf 2-order} of $x\in\Z\setminus \{0\}$, that is, the
  number of times it is divisible by $2$.
\end{defn}

\begin{lemma}\label{L:excursions}
  Consider a simple random walk from $x-s$ conditioned to hit $x+s$ before
  returning to $x-s$, and stopped when it reaches $x+s$. Let $a,b$ be the
  times of the first and last visits to $x$.  Then:
  \begin{enumerate}
  \item \label{i:1} The number of visits to $x$ is geometric with mean $s$.
  \item \label{i:2} The walk on $[0,a]$ is a walk conditioned to hit $x$
    before returning to $x-s$, and stopped when it reaches $x$.
  \item \label{i:3} The walk on $[b,\tau_{x+s}]$ is a walk from $x$
    conditioned to hit $x+s$ without returning to $x$, and stopped when it
    reaches $x+s$.
  \item \label{i:4} The two sub-walks and the geometric variable are
    independent.
  \end{enumerate}
\end{lemma}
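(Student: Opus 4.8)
The plan is to reduce to $x=0$ by translation invariance, and then to decompose the conditioned path at its successive visits to $0$ using the strong Markov property. Write $E$ for the event that the walk from $-s$ reaches $+s$ before returning to $-s$. I first observe that on $E$ the walk must reach $0$ before returning to $-s$, since every $\pm1$ path from $-s$ to $+s$ crosses $0$, and moreover that it never touches $-s$ at all: a step below $-s$ would force a later crossing of $-s$ on the way up to $+s$. Thus under $E$ the trajectory splits into an initial piece on $[0,a]$ from $-s$ to the first visit of $0$, a (possibly empty) sequence of excursions away from and back to $0$, and a final piece on $[b,\tau_{x+s}]$ from the last visit of $0$ up to $+s$.

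Next I would classify each excursion from $0$ by gambler's ruin. Call an excursion type $R$ if it returns to $0$ before hitting $\pm s$, type $U$ if it reaches $+s$ before returning to $0$, and type $D$ if it reaches $-s$ first; by the strong Markov property at successive visits to $0$ these excursions are i.i.d.\ with probabilities $r,u,d$. A one-step computation together with the linear gambler's-ruin harmonic function gives $u=d=\tfrac1{2s}$ (step up, then reach $s$ before $0$ starting from $1$), hence $r=1-\tfrac1s$. The number of visits $K$ to $0$ is exactly the number of excursions performed up to and including the first escape (the first non-$R$ excursion), which is geometric with parameter $1-r=2u=\tfrac1s$; unconditionally this is just the Green's-function statement that a walk started at $0$ visits $0$ a geometric, mean-$s$ number of times before leaving $[-s,s]$.

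The crucial point is that $E$ is precisely the event that this first escape is of type $U$ rather than $D$. Since $u=d$, at the moment of escape the up/down choice is a fair coin independent of how many type-$R$ excursions preceded it, so conditioning on $E$ does not alter the law of $K$; this gives part (1). The same splitting lets me read off the end segments: the piece on $[0,a]$ is a walk from $-s$ conditioned to hit $0$ before returning to $-s$ (part 2), the piece on $[b,\tau_{x+s}]$ is a type-$U$ excursion, i.e.\ a walk from $0$ conditioned to reach $+s$ without returning to $0$ (part 3, where I note such a path stays positive and so never sees $-s$, which is why no condition on $-s$ is needed there), and the intermediate excursions are i.i.d.\ type $R$.

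Finally, part (4) follows from a product factorization of path probabilities: for fixed $j$, the probability of a trajectory with $K=j$, a prescribed initial piece, prescribed returning excursions $\eta_1,\dots,\eta_{j-1}$, and a prescribed final piece factors as (initial)$\times\prod_i(\eta_i)\times$(final) in the per-step weights. Dividing by $\P(E)$ and summing out the unspecified pieces shows that the initial segment, the value of $K$, and the final segment are mutually independent with the laws above. The main obstacle I anticipate is purely bookkeeping, namely making the excursion decomposition and the conditioning precise enough that conditioning on $E$ is seen to act only through the single fair up/down choice at the escape, so that the clean independence and the mean-$s$ geometric law survive the conditioning; the symmetry $u=d$ is exactly what makes this work.
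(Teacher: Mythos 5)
Your proposal is correct and follows essentially the same route as the paper: an excursion decomposition around $x$ at successive visits, the gambler's-ruin computation giving return probability $1-\tfrac1s$ (hence the geometric, mean-$s$ count), independence of the visit count from the direction of the final escape, and the observation that conditioning on the exit side acts only through the excursion types. Your version merely spells out the product factorization of path probabilities that the paper compresses into ``the partition into excursions does not give any information on the trajectory within each excursion, except for its type.''
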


\begin{proof}
  In order to prove the first statement we first consider a simple random
  walk from $x$ up to the time $\tau$ when it reaches either $x-s$ or
  $x+s$.  This walk has probability $(s-1)/s$ of returning to $x$
  without hitting $\{x-s,x+s\}$, at which time another excursion from $x$
  begins.  Therefore the number of visits to $x$ on $[0,\tau]$ is geometric
  with mean $s$.  Moreover, the number of visits to $x$ is independent of
  whether the walk hits $x+s$ or $x-s$, so when conditioning on hitting
  $x+s$ the distribution is still geometric with mean $s$, which proves the
  first statement.

  Now we return to our original walk from $x-s$. Excursions from $x$ either
  return to $x$, or hit $x+s$, or hit $x-s$.  The partition into excursions
  around $x$ does not give any information on the trajectory within each
  excursion, except for its type, and the other claims follow.
\end{proof}

\begin{lemma}\label{L:LB_stopped}
  Let $S\colon \N\to \R$ be a simple random walk. For all $n$ we have
  \[
  \E \LIS\left(S|_{[0,\tau_{2^n})}\right) \geq n 2^{n-1},
  \]
  and for any $\eps>0$,
  \[
  \P\left(\LIS(S|_{[0,\tau_{2^n})}) < (1-\eps)n 2^{n-1}\right) \leq
  \frac{2}{\eps^2 n^2}.
  \]
\end{lemma}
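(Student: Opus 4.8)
The plan is to build, inside the walk $S|_{[0,\tau_{2^n})}$, an explicit increasing subsequence whose length $L_n$ obeys a clean distributional recursion, and then to read off both the mean and variance of $L_n$ from that recursion.

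First I would extract a genuine excursion from the walk. Let $\sigma$ be the last visit to $0$ before $\tau_{2^n}$; then by the last-exit decomposition $S|_{[\sigma,\tau_{2^n})}$ is a walk from $0$ conditioned to reach $2^n$ before returning to $0$, which is exactly the object of \cref{L:excursions} with $x=2^{n-1}$ and $s=2^{n-1}$. Any increasing subsequence of this excursion is an increasing subsequence of $S|_{[0,\tau_{2^n})}$, so it suffices to lower bound $\LIS$ of a scale-$n$ excursion. I build the relevant quantity $L_j$ recursively. Given a scale-$j$ excursion from $x-s$ to $x+s$ with $s=2^{j-1}$, \cref{L:excursions} splits it at the midpoint $x$ into a first piece on $[0,a]$ that is a scale-$(j-1)$ excursion confined to values $\le x$ by \eqref{i:2}; the $G_j$ visits to $x$, where $G_j$ is geometric with mean $s=2^{j-1}$ by \eqref{i:1}; and a last piece on $[b,\tau]$ that is a scale-$(j-1)$ excursion confined to values $>x$ by \eqref{i:3}. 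Taking the recursively constructed subsequences on the two outer pieces, all visits to $x$ in the middle, and concatenating in time order yields a valid increasing subsequence: the times increase by construction, while the selected values are $\le x$, then $=x$, then $>x$. This gives $L_j = L_{j-1}' + G_j + L_{j-1}''$ with $L_0=0$, where by \eqref{i:4} the copies $L_{j-1}', L_{j-1}''$ and $G_j$ are independent.

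Taking expectations, $\E L_j = 2\,\E L_{j-1} + 2^{j-1}$, and dividing by $2^j$ telescopes to $\E L_j = j\,2^{j-1}$; hence $\E\LIS(S|_{[0,\tau_{2^n})}) \ge \E L_n = n\,2^{n-1}$, which is the first claim. For the tail bound I would use Chebyshev on $L_n$. Independence gives $\Var L_j = 2\,\Var L_{j-1} + \Var G_j$, and since $G_j$ is geometric with parameter $p_j=2^{1-j}$ we have $\Var G_j = (1-p_j)/p_j^2 \le 4^{j-1}$. With $\Var L_0 = 0$ this solves to $\Var L_n \le \sum_{i=1}^{n} 2^{n-i} 4^{i-1} = \tfrac12 4^n - 2^{n-1} \le \tfrac12 4^n$. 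Since $\LIS \ge L_n$ and $\E L_n = n2^{n-1}$, Chebyshev yields
\[
\P\big(\LIS(S|_{[0,\tau_{2^n})}) < (1-\eps)n2^{n-1}\big) \le \P\big(|L_n - \E L_n| \ge \eps\,\E L_n\big) \le \frac{\Var L_n}{\eps^2 (n2^{n-1})^2} \le \frac{\tfrac12 4^n}{\eps^2 n^2 4^{n-1}} = \frac{2}{\eps^2 n^2},
\]
which is exactly the stated bound.

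The main obstacle is verifying the construction rigorously: that the last-exit time $\sigma$ genuinely produces an excursion distributed as in \cref{L:excursions}, so that $\LIS$ of the whole walk dominates $L_n$, and that the three concatenated pieces really form an increasing subsequence, which hinges on the outer pieces being confined to values $\le x$ and $>x$ as provided by parts \eqref{i:2} and \eqref{i:3}. The independence in \eqref{i:4} is what makes the variances add and is essential for the clean constant $2$ to emerge; everything else is the routine solution of two linear recursions.
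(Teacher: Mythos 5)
Your proposal is correct and is essentially the paper's own argument: both build the subsequence by dyadically splitting the excursion from the last zero to $\tau_{2^n}$ at midpoints via \cref{L:excursions}, obtaining a sum of independent geometric variables (one of mean $2^k$ for each of the $2^{n-k-1}$ values of $2$-order $k$), and then apply Chebyshev. The only difference is presentational — you organize the construction as a recursion in the scale, while the paper indexes the same intervals directly by $\ord_2(x)$ — and the resulting mean and variance bounds coincide.
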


\begin{proof}
  We construct an increasing subsequence of $S|_{[0,\tau_{2^n})}$ as
  follows.  Informally, we take some times $i$ to be in our index set,
  greedily in decreasing order of the 2-order of $S(i)$.

  For each integer $0\leq x\leq 2^n$ we construct an interval $I_x =[a_x,b_x]
  \subset [0,\tau_{2^n}]$.  The intervals are such that if $x<y$ then $b_x
  < a_y$.  Given such intervals, we have that $S$ is increasing along
  $A\subset [0,\tau_{2^n})$, where
  \[
  A=\bigcup_{x=1}^{2^n-1}\{i\in I_x : S(i)=x \}.
  \]

  We start by setting $I_0 = [0,b_0]$ and $I_{2^n} = [\tau_{2^n},\tau_{2^n}] =
  \{\tau_{2^n}\}$, where $b_0$ is the last visit to $0$
  before $\tau_{2^n}$. Let $k\leq n-1$ and $0<x<2^n$ be such that $\ord_2(x)=k$
  and assume by induction that $I_y=[a_y,b_y]$ are already defined for all
  $0<y\leq 2^n$ for which $\ord_2(y)>k$.  Now we define $I_x$. Let
  $\underline{x}=x-2^k$ and $\overline{x} = x+2^k$, then clearly
  $\ord_2(\underline{x}),\ord_2(\overline{x}) \geq k+1$. Thus
  $I_{\underline{x}}=[a_{\underline{x}},b_{\underline{x}}]$ and
  $I_{\overline{x}}=[a_{\overline{x}},b_{\overline{x}}]$ are already
  defined by the inductive hypothesis.  Let $I_x=[a,b]$, where $a$ is the
  first hitting time of $x$ after $b_{\underline{x}}$ and $b$ is the time
  of the last visit to $x$ before $a_{\overline{x}}$.  See \cref{F:dyadic}
  for an example.

\begin{figure}
\centering
\includegraphics[width=0.6\textwidth]{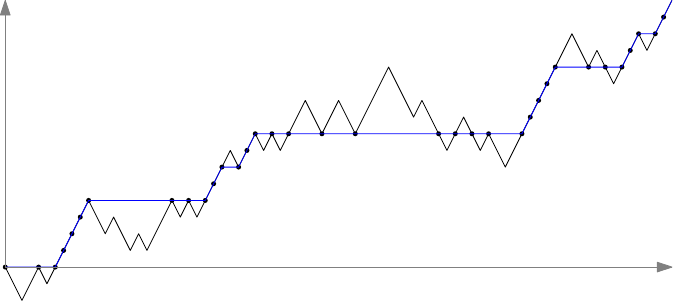}
\caption{The increasing subsequence constructed for
\cref{L:LB_stopped} in a simple random walk stopped at $16$. All visits to
$8$ are used, then all compatible visits to $4$,$12$, followed by
$2,6,10,14$ and a single visit to each odd value.  There exist longer
subsequences of length $42$ in this case.}
\label{F:dyadic}
\end{figure}

  Assuming $\ord_2(x)=k$, we show that the law of $S$ restricted to
  $[b_{\underline{x}},a_{\overline{x}}]$ is that of a simple random walk
  started at $x-2^k$ conditioned to hit $x+2^k$ before returning to $x-2^k$,
  and stopped when hitting $x+2^k$. This is seen inductively using
  \cref{L:excursions}, and since the walk after the last visit to
  $0$ before $\tau_{2^n}$ cannot return to $0$.

From the above, we deduce that for all $x\in(0,2^n)$ with $\ord_2(x)=k$
the number of visits to $x$ in $I_x$ is geometric with mean $2^{k}$, and
these geometric variables are all independent.
Since for each $k\in \{0,\dots, n-1\}$ there are $2^{n-k-1}$ integers
$x\in(0,2^n)$ with $\ord_2(x)=k$,
we get that
\[
\E|A| =\sum_{k=0}^{n-1} 2^{n-k-1} 2^{k}=n2^{n-1}.
\]

  As any geometric $X$ satisfies $\Var X=(\E X)(\E X-1)$ and our geometric
  random variables are all independent, we obtain that
  \[
  \Var |A| = \sum_{k=0}^{n-1} 2^{n-k-1} 2^{k} \left(2^{k}-1\right)
           \leq \sum_{k=0}^{n-1} 2^{n+k-1}
           \leq 2^{2n-1}.
  \]

The second claim now follows by Chebyshev's inequality.
\end{proof}

\begin{proof}[Proof of \cref{T:LB}]
  Fix $\eps>0$.  For large enough $n\in \N$ let $m=m(n)$ be an integer such
  that
  \begin{equation} \label{eq:mn}
  \frac 19 m 2^m \leq \eps \sqrt{n} \log_2 n < \frac 14 m 2^{m}.
  \end{equation}
  Then we have that
  \begin{equation}
    \label{eq:total}
    \P\left(\LIS(S|_{[0,n)}) < \eps \sqrt{n}\log_2 n\right)
    \leq \P\left(\LIS(S|_{[0,\tau_{2^m})}) < \tfrac12 m 2^{m-1} \right) +
    \P\big(\tau_{2^m}> n\big).
  \end{equation}
  Applying \cref{L:LB_stopped} for this $m$ we get
  \[
  \P\left(\LIS(S|_{[0,\tau_{2^m})}) < \tfrac12 m 2^{m-1} \right) \leq
  \frac{8}{m^2}.
  \]
  Moreover, \cite[Thm.~2.17]{LPW} and \eqref{eq:mn} imply that
  \[
  \P\left(\tau_{2^m} > n\right) \leq \frac{12 \cdot 2^m}{\sqrt{n}} \leq
  108 \eps \frac{\log_2 n}{m}.
  \]
  Since $\log_2 n/m \to 2$ and $8/m^2 \to 0$ as $n\to
  \infty$, plugging the previous bounds in \eqref{eq:total} gives for $n$
  large enough
  \[
  \P\left(\LIS(S|_{[0,n)}) < \eps \sqrt{n}\log_2 n\right) \leq
  \frac{8}{m^2} + 108\eps \frac{\log_2 n}{m} \leq 250 \eps.
  \]

Finally, applying the above inequality for $\eps=1/500$ implies that
  \[
  \E \left(\LIS(S|_{[0,n)})\right) \geq \frac{\sqrt{n} \log_2 n}{1000}.
  \qedhere
  \]
\end{proof}


\section{Random walks in higher dimensions} \label{s:high}

The main goal of this section is to prove \cref{T:2d_LB}.  As noted, the
upper bound in the one-dimensional case holds trivially in every dimension.
For sequences $\{a_n\},\{b_n\}$ we use the notation $a_n\sim b_n$ if
$a_n/b_n\to 1$ as $n\to \infty$.  The lower bound is based on the
following estimate by Denisov and Wachtel, see \cite[Example~2]{DW} and see
there the history of similar estimates for Brownian motion and random
walks.

\begin{theorem}\label{T:2dgreedy}
  Let $S\colon \N\to \R^2$ be a two-dimensional random walk satisfying the
  conditions of \cref{T:2d_LB}.  Let $\tau$ be the hitting time of the
  positive quadrant: $\tau=\inf\{n>0 : S(n) \in \R_+^2\}$. Then there is
  some $c\in \R^+$ so that
  \[
  \P(\tau>n) \sim c n^{-1/3}.
  \]
\end{theorem}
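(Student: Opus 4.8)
The plan is to recognize this as an exit-time (persistence) asymptotic for a random walk confined to a planar cone, to read off the governing exponent from the geometry, and to transfer the Brownian answer to the walk by the harmonic-function method of Denisov and Wachtel. The event $\{\tau>n\}$ is exactly the event that $S(1),\dots,S(n)$ all lie in the closed set $K=\R^2\setminus\R_+^2$, the complement of the open positive quadrant, which in polar coordinates is the wedge $\{(\rho,\phi):\phi\in[\tfrac{\pi}{2},2\pi]\}$ of opening angle $\theta=\tfrac{3\pi}{2}$; so we are computing the survival probability of the walk trapped in this wedge. The hypothesis $\mathrm{Cov}(X)=I_2$ is what keeps the quadrant a genuine right angle, since an anisotropic covariance would, after diagonalization, distort it into a wedge of different angle and change the exponent.

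Next I would read off the exponent from the Brownian analogue. By Donsker's theorem the rescaled walk converges to standard planar Brownian motion, under which staying in $K$ becomes staying in the wedge. For a planar wedge of angle $\theta$ the positive harmonic function vanishing on the boundary is $u(\rho,\phi)=\rho^{p}\sin\!\big(p(\phi-\tfrac{\pi}{2})\big)$ with homogeneity degree $p=\pi/\theta$, its angular part being the ground state of the Dirichlet Laplacian on an arc of length $\theta$ with eigenvalue $(\pi/\theta)^2$; the Brownian survival probability then decays like $t^{-p/2}$. For $\theta=\tfrac{3\pi}{2}$ this gives $p=\tfrac23$ and survival exponent $p/2=\tfrac13$, the claimed rate.

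To transfer this to the walk, the key object is a function $V\colon K\to[0,\infty)$ that is exactly harmonic for the killed walk, $V(x)=\E_x[V(S(1));\,\tau>1]$, and asymptotic to $u$ at infinity. Following Denisov--Wachtel, $V$ is built as the martingale-type limit $V(x)=\lim_n\E_x[u(S(n));\,\tau>n]$, and one proves, under the $2+\eta$ moment assumption, the sharp asymptotic $\P_x(\tau>n)\sim\kappa\,V(x)\,n^{-1/3}$ for fixed interior $x$. Since our walk starts at the apex $0$, where $u$ and $V$ vanish, I would remove the degeneracy by conditioning on the first step: $\P_0(\tau>n)=\E_0[\P_{S(1)}(\tau>n-1);\,S(1)\in K]$. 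Because $V(x)=O(|x|^{2/3})$ while $\E|X|^{2/3}<\infty$, dominated convergence gives $\P(\tau>n)\sim\kappa\,\E_0[V(X);\,X\in K]\,n^{-1/3}$, identifying the constant $c=\kappa\,\E_0[V(X);\,X\in K]$, which is finite since $X$ has a second moment and positive since the first step lands strictly inside $K$ with positive probability.

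The main obstacle is this last step: constructing $V$ and establishing the precise prefactor $\kappa V(x)$, which is the entire technical content of the cited work. It requires a strong coupling of $S$ with Brownian motion valid under only polynomial ($2+\eta$) moments rather than the exponential moments underlying the classical KMT coupling, delicate control of the walk near $\partial K$ so that the discrete and continuous exit events agree to leading order, and a proof that the limit defining $V$ converges and is genuinely walk-harmonic. As the quadrant is precisely \cite[Example~2]{DW}, in the paper it suffices to invoke that result; the computation above accounts for the exponent $\tfrac13$.
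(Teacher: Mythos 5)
The paper offers no proof of this statement at all: it is quoted verbatim from Denisov and Wachtel \cite[Example~2]{DW}, which is exactly where your proposal also places the hard analytic content (construction of the walk-harmonic function $V$, the coupling under $2+\eta$ moments, and the sharp prefactor). Your supplementary derivation of the exponent --- survival in the wedge of angle $\theta=\tfrac{3\pi}{2}$ complementary to the quadrant, giving $p=\pi/\theta=\tfrac23$ and decay rate $p/2=\tfrac13$ --- and your first-step conditioning to handle the degenerate start at the apex are both correct and consistent with the cited result, so the proposal matches the paper's (purely referential) treatment.
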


More generally, for a higher dimensional random walk $S\colon \N\to\R^d$, define
the hitting time
\[
\tau =\inf\{n>0 : S(n) \in \R_+^d\}.
\]
Denisov and Wachtel \cite[Theorem~1]{DW} proved that $\P(\tau>n)\sim cn^{-\alpha}$
for some $c\in \R^+$ and $\alpha \in (0,\infty)$, where $\alpha$ is the
exponent corresponding to Brownian motion staying outside a quadrant up to
time $t$ (assuming again that the walk is normalized so that $\E X=\mathbf{0}$ and
$\E X_i X_j = \delta_{ij}$, and that $\E\|X\|^{2+\eps}<\infty$ for some
$\eps>0$).  Consequently, the following lemma completes the proof of
\cref{T:2d_LB} (with $\alpha=1/3$), and gives a similar lower bound
for random walks in higher dimensions.

\begin{lemma}
  Let $S\colon \N\to\R^d$ be a random walk in $\R^d$, and let $0<\alpha<1$ be
  such that
  \[
  \P(\tau>n) = O(n^{-\alpha}).
  \]
  Then there is a constant $c\in \R^+$ such that for all $\eps>0$
  and $n>0$
  \begin{equation} \label{eq:2dim}
    \P(\LIS(S|_{[0,n)}) < \eps n^{\alpha})\leq c\eps.
  \end{equation}
  Consequently, for all $n>0$ we have
  \begin{equation*}
    \E \LIS(S|_{[0,n)})\geq {\textstyle \frac{1}{4c}} n^{\alpha}.
  \end{equation*}
\end{lemma}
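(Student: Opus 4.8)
The plan is to exploit the regeneration structure provided by $\tau$. Define renewal times $\theta_0=0$ and, inductively, $\theta_{k+1}=\inf\{t>\theta_k : S(t)-S(\theta_k)\in\R_+^d\}$. By the strong Markov property the increments $T_k=\theta_k-\theta_{k-1}$ are i.i.d.\ copies of $\tau$, and they are a.s.\ finite since $\P(\tau>n)=O(n^{-\alpha})\to 0$. By construction each increment $S(\theta_k)-S(\theta_{k-1})$ lies in $\R_+^d$, so $S$ is coordinate-wise non-decreasing along $\theta_0<\theta_1<\cdots$. Writing $N_n=\max\{k:\theta_k<n\}$ for the number of renewals strictly before time $n$, the set $\{\theta_0,\dots,\theta_{N_n}\}\subset[0,n)$ witnesses $\LIS(S|_{[0,n)})\geq N_n+1>N_n$. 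Hence, with $k=\lceil\eps n^\alpha\rceil$,
\[
\P\big(\LIS(S|_{[0,n)})<\eps n^\alpha\big)\leq\P(N_n<k)=\P(\theta_k\geq n)=\P\Big(\textstyle\sum_{i=1}^k T_i\geq n\Big),
\]
and it remains to bound the right-hand side by $c\eps$.

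The key step, which I expect to be the main obstacle, is that for $\alpha<1$ the tail bound $\P(\tau>t)\leq Ct^{-\alpha}$ (which, after enlarging $C$, we may assume holds for all $t\geq 1$) forces $\E\tau=\infty$, so no law-of-large-numbers or naive first-moment bound applies to $\sum T_i$. I would resolve this by truncating at the natural scale $n$. Splitting according to whether some increment is large,
\[
\Big\{\textstyle\sum_{i=1}^k T_i\geq n\Big\}\subseteq\{\exists\, i\leq k:T_i\geq n\}\cup\Big\{\textstyle\sum_{i=1}^k (T_i\wedge n)\geq n\Big\}.
\]
The first event has probability at most $k\,\P(\tau\geq n)\leq kC'n^{-\alpha}$ by a union bound. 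For the second I would apply Markov's inequality together with the truncated-mean estimate $\E[\tau\wedge n]=\sum_{t=0}^{n-1}\P(\tau>t)\leq C''n^{1-\alpha}$ — here $\alpha<1$ is exactly what makes the partial sum of $t^{-\alpha}$ grow like $n^{1-\alpha}$ — giving $\P(\sum_{i=1}^k(T_i\wedge n)\geq n)\leq k\,\E[\tau\wedge n]/n\leq kC''n^{-\alpha}$. In the non-trivial regime $\eps n^\alpha>1$ we have $k=\lceil\eps n^\alpha\rceil\leq 2\eps n^\alpha$, so both terms are $O(\eps)$, and summing yields $\P(\sum_{i=1}^k T_i\geq n)\leq c\eps$ for a constant $c$ depending only on $C$ and $\alpha$. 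The balance here is tight by design: the budget of $\eps^{1/\alpha}n\ll n$ for the truncated sum matches the count of $\eps n^\alpha$ renewals.

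It remains to dispose of edge cases and deduce the expectation bound. Since $\LIS\geq 1$ always, whenever $\eps n^\alpha\leq 1$ the event $\{\LIS<\eps n^\alpha\}$ is empty and \eqref{eq:2dim} holds trivially; the finitely many remaining small-$n$ cases with $\eps n^\alpha>1$ force $\eps$ bounded below, so enlarging $c$ makes $c\eps\geq 1$ and the bound is again trivial. This establishes \eqref{eq:2dim}. For the second assertion I would use the elementary lower bound $\E\LIS\geq\eps n^\alpha\,\P(\LIS\geq\eps n^\alpha)\geq\eps n^\alpha(1-c\eps)$ and optimize in $\eps$: taking $\eps=\tfrac{1}{2c}$ gives $\E\LIS(S|_{[0,n)})\geq\tfrac{1}{4c}n^\alpha$, as claimed.
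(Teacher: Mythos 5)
Your proposal is correct and follows essentially the same route as the paper: a greedy renewal sequence whose increments are i.i.d.\ copies of $\tau$, truncation of the increments at level $n$, a union bound for the event that some increment exceeds $n$, and Markov's inequality applied to the truncated sum using $\E[\tau\wedge n]=O(n^{1-\alpha})$, followed by the same choice $\eps=\frac{1}{2c}$ for the expectation bound. The only differences are cosmetic (ceiling versus floor for $k_n$, and your explicit treatment of the trivial small-$n$ cases).
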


\begin{proof}
  Fix $\eps>0$. Define the greedy increasing subsequence with time indices
  given by the recursion
  \[
  a_0=0, \quad a_{i+1}=\min\{a>a_i: S(a)-S(a_i)\in \R_+^d\}.
  \]
  Setting $k_n = \lfloor \eps n^\alpha\rfloor$, we see that if $a_{k_n} <
  n$ then $\LIS(S|_{[0,n)}) \geq k_n+1 > \eps n^{\alpha}$.  This gives a set $\{a_i:
  i\in \N\}\subset \N$ with i.i.d.\ increments $X_n = a_n-a_{n-1}$ with the
  law of $\tau$.

  Choose $c_1\in \R^+$ such that for all $n\in \N^+$
  \[
  \P(\tau>n)\leq c_1 n^{-\alpha},
  \]
  and define the truncated variables $Y_i = X_i \mathbf{1}\{X_i\leq
  n\}$. Then
  \begin{equation}\label{eq:xiyi}
    \P\left(\exists i\leq k_n: X_i\neq Y_i \right) \leq k_n \P(\tau>n) \leq
    c_1\eps.
  \end{equation}

  The $Y_i$ also form an i.i.d.\ sequence and satisfy
  \begin{equation*}
    \E Y_i \leq \sum_{m=0}^{n} \P(\tau>m)\leq c_2 n^{1-\alpha},
  \end{equation*}
  where $c_2\in \R^+$ depends only on $\alpha$ and $c_1$. By Markov's
  inequality,
  \begin{equation}\label{eq:markov}
    \P\left(\sum_{i=1}^{k_n} Y_i \geq n \right) \leq \frac{k_n \E Y_1}{n} \leq
    c_2 \eps.
  \end{equation}
  Combining \eqref{eq:xiyi} and \eqref{eq:markov} we obtain
  \begin{align*}
    \P\bigg(\sum_{i=1}^{k_n} X_i \geq n\bigg) &\leq \P\Big(\exists i\leq k_n:
    X_i\neq Y_i \Big)+\P\bigg(\sum_{i=1}^{k_n} Y_i \geq n \bigg) \\
    &\leq (c_1+c_2)\eps.
  \end{align*}
  As noted, this is a bound on $\P(\LIS(S|_{[0,n)}) < \eps n^{\alpha})$.
  Hence \eqref{eq:2dim} holds with $c=c_1+c_2$.  Applying \eqref{eq:2dim}
  with $\eps=1/(2c)$ yields the second claim.
\end{proof}


\section{Open Questions} \label{s:open}

There are many potential extensions of our results.  Two central open
problems are to reduce the gap between the lower and upper bounds in
dimension one, and to determine the right order of magnitude in higher
dimensions.  Moreover, our lower bound in \cref{T:LB} is specific to the
simple random walks,
and our proof does not work for general random walks.

\begin{question}
  Let $S\colon \N\to \R$ be a random walk with zero mean and finite (positive)
  variance.  Is there a constant $a$ such that, with probability $1-o(1)$,
  \[
  \LIS(S|_{[0,n)})\leq \sqrt{n} \log^a n?
  \]
  Does this upper bound hold at least when $S$ is a simple random walk?
\end{question}

\begin{question}
  Let $d\geq 2$ and let $S^d\colon  \N \to \R^{d}$ be a $d$-dimensional simple
  random walk. What is the order of magnitude of $\LIS(S^d|_{[0,n)})$ with
  probability $1-o(1)$?
\end{question}

A greedy construction gives (for the simple random walk) $\E
\LIS(S^d|_{[0,n)}) \geq n^{c_d+o(1)}$ for some $c_d>0$, with $c_2=1/3$.
Can this be improved?  Can we find an upper bound of the form
$n^{C_d+o(1)}$, with $C_d\to 0$ as $d\to \infty$?


\end{document}